\newtheorem{thm}{Theorem}
\newtheorem{lemma}[thm]{Lemma}
\newtheorem{theorem}[thm]{Theorem}
\def\\{\cr}
\def\({\left(}
\def\){\right)}
\def\[{\left[}
\def\]{\right]}
\def\<{\langle}
\def\>{\rangle}
\begin{document}
\date{ }
\author{
{\sc   Bryce Kerr} \\
{Department of Computing, Macquarie University} \\
{Sydney, NSW 2109, Australia} \\
{\tt  bryce.kerr@mq.edu.au}}
\title{Rational exponential sums over the divisor function}

 \maketitle
\begin{abstract}
We consider a problem posed by Shparlinski, of giving nontrivial bounds for rational exponential sums over the arithmetic function $\tau(n)$, counting the number of divisors of $n$. This is done using some ideas of Sathe concerning the distribution in residue classes of the function $\omega(n)$, counting the number of prime factors of $n$, to bring the problem into a form where, for general modulus, we may apply a bound of Bourgain concerning exponential sums over subgroups of finite abelian groups and for prime modulus some results of Korobov and Shkredov.
\end{abstract}

\section{Introduction}
We consider a problem posed by Shparlinski~\cite[Problem~3.27]{Sp} of bounding rational exponential sums over the divisor function. More specifically, for integers $a,m$ with $(a,m)=1$ and $m$ odd  we consider the sums 
\begin{equation}
\label{main sums1}
T_{a,m}(N)=\sum_{n=1}^{N}e_m({a\tau(n)}),
\end{equation}
where $e_m(z)=e^{2\pi i z/m}$ and $\tau(n)=\sum_{d|n}1$ counts the number of divisors of $n$. Arithmetic properties of the divisor function have been considered in a number of works, see for example ~\cite{DeIw,ErdMir, Hb,LuSh}, although we are concerned mainly with  
 congruence properties of the divisor function, which have also been considered in~\cite{Co,Nz,Se}. Exponential sums over some other arithmetic functions have been considered in~\cite{BHS,BS}. Our first step in bounding the sums~\eqref{main sums1} is to give a sharper version of a result of Sathe~\cite[Lemma~1]{Se} concerning the distribution of the function $\omega(n)$ in residue classes, where $\omega(n)$ counts the number of distinct prime factors of $n$. This allows us to reduce the problem of bounding~\eqref{main sums1} to bounding sums of the form
\begin{equation}
\label{S}
S_m(r)=\sum_{n=1}^{t}e_m(r2^n),
\end{equation}
where $t$ denotes the order of $2 \pmod m$ and we may not necessarily have $(r,m)=1$.
For arbitrary $m$ we deal with these sums using a bound of Bourgain~\cite{Bou} and when $m$ is prime we obtain sharper bounds using results of Korobov~\cite{Kor} when the order of $2 \pmod m$ is not to small. For smaller values we use results of Shkredov~\cite{Sk}, which are based on previous results of Heath-Brown and Konyagin~\cite{HbKg}.

\section{Notation}
 We use the notation $f(x) \ll g(x)$ and $f(x)=O(g(x))$ to mean there exists some absolutle constant $C$ such that $f(x)\le Cg(x)$ and we use $f(x)=o(g(x))$ to mean that $f(x)\le \varepsilon g(x)$ for any $\varepsilon>0$ and sufficiently large $x$. \newline \indent
If $p|n$ and $\theta$ is the largest power of $p$ dividing $n$, we write $p^{\theta}||n$.
We let $\mathcal{S}$ denote the set of all square-free integers,
$\mathcal{M}_m$ the set of integers which are perfect $m$-th powers,  $\mathcal{Q}_m$  the set of integers $n$,  such that if $p^{\theta}||n$ then $2\le \theta \le m-1$ and   $\mathcal{K}$ the set of integers $n$ such that if $p^{\theta}||n$ then $\theta\ge 2$. \newline \indent Given an arbitrary set of integers $\mathcal{A}$, we let $\mathcal{A}(x)$ count the number of integers in $\mathcal{A}$ less than $x$, so that 
\begin{equation*}
\mathcal{Q}_m(x)\le \mathcal{K}(x) \ll x^{1/2},
\end{equation*}
hence the sums
\begin{equation}
\label{H def}
H(r,m)=\displaystyle\sum_{\substack{q \in \mathcal{Q}_m \\ \tau(q) \equiv r \pmod m}}\frac{h(q)}{q}, \quad \quad h(q)=\prod_{p|q}\left(1+\frac{1}{p}\right)^{-1},
\end{equation}
converge. We let $\zeta(s)$ denote the Riemann-zeta function, 
$$\zeta(s)=\sum_{n=1}^{\infty}\frac{1}{n^s}, \quad \Re(s)>1,$$
and $\Gamma(s)$ the Gamma function, 
$$\Gamma(s)=\int_{0}^{\infty}x^{s-1}e^{-x}dx, \quad \Re(s)>0.$$
For odd integer $m$ we let $t$ denote the order of $2 \pmod m$ and define
\begin{equation}
\label{alpha def}
\alpha_t=1-\cos(2\pi/t).
\end{equation}

\section{Main results}

\begin{theorem}
\label{thm:t1}
Suppose $m$ is odd and sufficiently large. Then with notation as in~\eqref{main sums1},~\eqref{S},~\eqref{H def} and~\eqref{alpha def} we have
$$T_{a,m}(N)=\frac{\zeta{(m)}}{t}\frac{6}{\pi^2}\left(\displaystyle\sum_{r=0}^{m-1}H(r,m)S_{m}(ar)\right)N+O\left(tN(\log{N})^{-\alpha_t}\right).$$
\end{theorem}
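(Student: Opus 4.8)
The plan is to reduce $T_{a,m}(N)$ to a sum over powerful numbers by isolating the squarefree part of $n$, and then to evaluate the inner sums by a Selberg--Delange/Sathe-type equidistribution of $\omega$ modulo $t$. First I would use the unique factorisation $n=s\ell$ with $s\in\mathcal{S}$ squarefree, $\ell\in\mathcal{K}$ powerful and $(s,\ell)=1$. Since $\tau$ is multiplicative and $\tau(s)=2^{\omega(s)}$, this gives $\tau(n)=2^{\omega(s)}\tau(\ell)$, and hence
\[
T_{a,m}(N)=\sum_{\ell\in\mathcal{K}}\ \sum_{\substack{s\le N/\ell\\ s\in\mathcal{S},\ (s,\ell)=1}} e_m\big(a\tau(\ell)2^{\omega(s)}\big).
\]
Because $2^{\omega(s)}\bmod m$ depends only on $\omega(s)\bmod t$, the weight $e_m(a\tau(\ell)2^{\omega(s)})$ is a function of $\omega(s)\bmod t$ alone.

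Next I would fix $\ell$ and split the inner sum according to $\omega(s)\bmod t$. Writing $x=N/\ell$, the heart of the matter is the sharpened form of Sathe's lemma: for each residue $c$, the number of $s\le x$ with $s\in\mathcal{S}$, $(s,\ell)=1$ and $\omega(s)\equiv c\pmod t$ should equal $\frac1t\cdot\frac{6}{\pi^2}h(\ell)x+O\big(x(\log x)^{-\alpha_t}\big)$, the density constant $\frac{6}{\pi^2}h(\ell)$ being that of squarefree integers coprime to $\ell$. This is obtained by detecting the congruence with the $t$-th roots of unity $e^{2\pi i j/t}$ and estimating $\sum_{s}e^{2\pi i j\omega(s)/t}$ for $1\le j\le t-1$ by the Selberg--Delange method; the dominant secondary term comes from $j=\pm1$ and has size $x(\log x)^{\cos(2\pi/t)-1}=x(\log x)^{-\alpha_t}$, which is precisely where $\alpha_t$ enters. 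Summing over $c$ against the weights $e_m(a\tau(\ell)2^c)$ then yields, for the inner sum, the main term $\frac1t\,\frac{6}{\pi^2}h(\ell)\frac{N}{\ell}S_m(a\tau(\ell))$ up to an admissible error.

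Summing over $\ell$, I would truncate the powerful numbers at $z=(\log N)^{A}$: since $\#\{\ell\in\mathcal{K}:\ell\le x\}\ll x^{1/2}$ gives $\sum_{\ell\in\mathcal{K},\,\ell>z}1/\ell\ll z^{-1/2}$, the contribution of $\ell>z$ is negligible, while for $\ell\le z$ the equidistribution is uniform enough since only $O(\log\log N)$ primes are excluded by $(s,\ell)=1$. Collecting the main terms gives
\[
T_{a,m}(N)=\frac{6}{\pi^2}\,\frac{N}{t}\sum_{\ell\in\mathcal{K}}\frac{h(\ell)}{\ell}\,S_m(a\tau(\ell))+O\big(tN(\log N)^{-\alpha_t}\big),
\]
the factor $t$ arising from the trivial bound $|S_m|\le t$ and from summing the $t-1$ root-of-unity contributions.

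It then remains to identify the main-term constant, i.e.\ to prove
\[
\sum_{\ell\in\mathcal{K}}\frac{h(\ell)}{\ell}\,S_m(a\tau(\ell))=\zeta(m)\sum_{q\in\mathcal{Q}_m}\frac{h(q)}{q}\,S_m(a\tau(q))=\zeta(m)\sum_{r=0}^{m-1}H(r,m)S_m(ar).
\]
Here I would exploit that $S_m(2r)=S_m(r)$, which follows from $2^t\equiv1\pmod m$ on reindexing $S_m(r)=\sum_{n=1}^t e_m(r2^n)$. Since $\tau(\ell)\bmod m$ is multiplicative over prime powers and $S_m$ is invariant under multiplication by $2$, the sum over $\mathcal{K}$ factors into an Euler product of local sums over the exponent $\theta$ at each prime $p$. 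For $\theta\ge2$ with $\theta\equiv0\pmod m$ the local factor $\tau(p^\theta)=\theta+1\equiv1$ is trivial, and for $\theta\equiv1\pmod m$ the factor $\equiv2$ is harmless by the $2$-invariance; summing these two families of geometric series and using $h(p)(1+1/p)=1$ contributes exactly $(1-p^{-m})^{-1}$ at each prime, whereas the exponents $2\le\theta\le m-1$ reproduce the local factor of the $\mathcal{Q}_m$ sum. Taking the product over $p$ produces $\zeta(m)=\prod_p(1-p^{-m})^{-1}$ together with the restriction to $\mathcal{Q}_m$, which completes the evaluation. The main obstacle is the uniform Selberg--Delange estimate of the second paragraph, in particular securing the error $(\log x)^{-\alpha_t}$ with enough uniformity in $\ell$ to survive the summation over powerful $\ell$; the main-term identity, by contrast, is a clean local computation once the $2$-invariance of $S_m$ is observed.
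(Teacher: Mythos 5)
Your proposal is correct, and its analytic core coincides with the paper's: both reduce the problem to a uniform Sathe/Selberg--Delange statement about the equidistribution of $\omega(s)\bmod t$ over squarefree $s$ coprime to a fixed modulus, with the nontrivial roots of unity $j=\pm1$ producing the exponent $\alpha_t=1-\cos(2\pi/t)$ and the sum over the $t$ residue classes producing the factor $t$ in the error term. Where you genuinely diverge is in the combinatorial bookkeeping. The paper uses Sathe's three-fold decomposition $n=sqk$ with $q\in\mathcal{Q}_m$ and $k\in\mathcal{M}_m$, so that the factor $\zeta(m)=\sum_{k\in\mathcal{M}_m}k^{-1}$ and the sum $\sum_{q\in\mathcal{Q}_m}h(q)q^{-1}S_m(a\tau(q))=\sum_{r}H(r,m)S_m(ar)$ appear directly; it then truncates at $K=N^{1/2}$ and needs Lemma~\ref{finite primes} to control the convolution error $x^{1/2}(e^4\log x)^{\omega(q)}$ over this long range. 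You instead use the squarefree-times-powerful decomposition $n=s\ell$, truncate the powerful part at $(\log N)^{A}$ (which makes the required uniformity in $\ell$ much milder, since then $\omega(\ell)\ll\log\log N$ and $\log(N/\ell)\asymp\log N$), and recover the stated constant a posteriori via the identity $\sum_{\ell\in\mathcal{K}}h(\ell)\ell^{-1}S_m(a\tau(\ell))=\zeta(m)\sum_{q\in\mathcal{Q}_m}h(q)q^{-1}S_m(a\tau(q))$. That identity is the one step you state rather than prove in full, but it does check out: writing $\theta_p=m\gamma_p+\beta_p$ at each prime, the exponents with $\beta_p\in\{0,1\}$ contribute a local factor $1+h(p)(1+1/p)\,p^{-m}(1-p^{-m})^{-1}=(1-p^{-m})^{-1}$ (using $h(p)(1+1/p)=1$ and the invariance $S_m(2r)=S_m(r)$ to absorb the $\beta_p=1$ primes), while $2\le\beta_p\le m-1$ reproduces the $\mathcal{Q}_m$ local factor times $(1-p^{-m})^{-1}$, so the product is exactly $\zeta(m)H$-weighted as claimed. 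Net effect: your route trades the paper's heavier error analysis (Lemma~\ref{finite primes} and the $N^{1/2}$ truncation) for an extra Euler-product computation at the end; both are sound, and yours is arguably cleaner on the analytic side provided you do supply the uniform version of the equidistribution lemma you flag as the main obstacle.
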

When $m=p$ is prime we use a different approach to save an extra power of $\log{N}$ in the asymptotic formula above, although our bound is worse in the $t$ aspect.
\begin{theorem}
\label{thm:t2}
Suppose $p>2$ is prime, then
\begin{align*}
T_{a,m}(N)&=\frac{\zeta{(p)}}{t}\frac{6}{\pi^2}\left(\displaystyle\sum_{r=0}^{p-1}H(r,p)S_{p}(ar)\right)N+O\left(pN(\log{N})^{-(\alpha_t+1)}\right).
\end{align*}
\end{theorem}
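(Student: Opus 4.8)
The plan is to re-run the reduction behind Theorem~\ref{thm:t1} and then sharpen its equidistribution input by a device available only for prime modulus. That reduction expresses $T_{a,p}(N)$, up to the main-term factors $\zeta(p)$ coming from the perfect $p$-th powers $\mathcal{M}_{p}$ and $H(r,p)$ coming from $\mathcal{Q}_{p}$, in terms of the squarefree sums $\sum_{s\le x}e_p\!\left(ar\,2^{\omega(s)}\right)$, in which $2^{\omega(s)}\equiv\tau(s)\pmod p$ and $x$ ranges up to $N$ divided by the perfect-power and $\mathcal{Q}_{p}$ parts. Everything then hinges on an asymptotic for these squarefree sums that is uniform in $r$ and in $x$.

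The prime-specific device is that $2^{\omega(s)}$ lies in the cyclic group $(\mathbb{Z}/p\mathbb{Z})^{*}$, so its value can be detected by multiplicative characters: with $e_p(ac)=\tfrac{1}{p-1}\sum_{\chi}g_{\chi}(a)\chi(c)$ and Gauss sums $g_{\chi}(a)=\sum_{c}e_p(ac)\overline{\chi}(c)$, the squarefree sum becomes a combination of $\sum_{s\le x}\chi\!\left(2^{\omega(s)}\right)=\sum_{s\le x}\chi(2)^{\omega(s)}$. The advantage over the purely additive treatment of $\omega(s)\bmod t$ is that $\chi(2)^{\omega(s)}$ is multiplicative in $s$, so it carries an Euler product $\prod_{\ell}\!\left(1+\chi(2)\ell^{-\sigma}\right)=\zeta(\sigma)^{\chi(2)}G_{\chi}(\sigma)$ with $G_{\chi}$ holomorphic for $\Re\sigma>1/2$. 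Because $2$ has order $t$, the value $\chi(2)$ runs over the $t$-th roots of unity, each attained by $(p-1)/t$ characters.

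For the $(p-1)/t$ characters with $\chi(2)=1$ the series has a simple pole at $\sigma=1$, so Perron's formula with the classical zero-free region gives a main term proportional to $x$ whose error saves a power of $\log$; reassembling these against the Gauss sums recovers the stated main term $\tfrac{\zeta(p)}{t}\tfrac{6}{\pi^{2}}\sum_{r}H(r,p)S_{p}(ar)N$. For $\chi(2)=w\ne1$ the point $\sigma=1$ is a branch point, and a Selberg--Delange (Hankel contour) analysis yields $\sum_{s\le x}\chi(2)^{\omega(s)}=x(\log x)^{w-1}\!\left(c_{0}(\chi)+c_{1}(\chi)(\log x)^{-1}+\cdots\right)$. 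The frequencies $w=e^{\pm2\pi i/t}$ nearest $1$ contribute terms of size $(\log x)^{-\alpha_{t}}$, which is exactly the error recorded in Theorem~\ref{thm:t1}. To gain the extra logarithm I would retain the secondary coefficient $c_{1}(\chi)$ and show that, after weighting by $g_{\chi}(a)$ and summing the $(p-1)/t$ characters sharing a common $w$, the pieces of order $(\log N)^{-\alpha_{t}}$ drop out, leaving the contribution at order $(\log N)^{-\alpha_{t}-1}$; the worse factor $p$ then enters through the $\sqrt{p}$ size of the Gauss sums against the $1/(p-1)$ normalisation.

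The hard part will be precisely this extra saving, since a crude estimate only reproduces Theorem~\ref{thm:t1}. One must evaluate the branch-point contributions of the critical frequencies $w=e^{\pm2\pi i/t}$ together with their secondary terms, prove that the resulting $(\log N)^{-\alpha_{t}}$ pieces cancel after summation over the characters and over $q\in\mathcal{Q}_{p}$, and keep every Selberg--Delange error uniform in $\chi$, in $p$, and as $w\to1$. Controlling these contour estimates simultaneously for all $p-1$ characters and bounding the remainder by $pN(\log N)^{-(\alpha_{t}+1)}$ is the crux; the factors $S_{p}(ar)$ in the main term are then estimated separately using the bounds of Korobov and Shkredov cited above.
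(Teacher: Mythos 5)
Your overall strategy -- detect $\tau(n)\bmod p$ with multiplicative characters, feed the resulting multiplicative sums $\sum_{n\le x}\chi(2)^{\omega(n)}$ (equivalently $\sum_{n\le x}\chi(\tau(n))$) into Selberg--Delange, and exploit the $p^{1/2}$ size of the Gauss sums -- is exactly the paper's strategy (the paper works with $\sum_{n\le x}\chi(\tau(n))=\zeta(s)^{\chi(2)}F(s,\chi)$ directly and recovers the main-term constant at the end by comparison with Theorem~\ref{thm:t1}, rather than re-running the $n=sqk$ decomposition, but that is cosmetic). The problem is the mechanism you propose for the extra logarithm.

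You locate the crux in proving that, after weighting by the Gauss sums and summing the $(p-1)/t$ characters with a common value $\chi(2)=w$, the leading $(\log N)^{w-1}$ contributions of size $(\log N)^{-\alpha_t}$ \emph{cancel}, leaving only the secondary coefficient $c_1(\chi)$ at order $(\log N)^{-\alpha_t-1}$. No such cancellation is established, and there is no reason to expect one: the quantities $\sum_{\chi(2)=w}G(\chi)\overline\chi(r)g_\chi(a)$ are generically nonzero, so the $(\log N)^{-\alpha_t}$ terms survive. The paper needs no cancellation. Two cheaper observations do the work: (i) the Selberg--Delange remainder for each character is $O\bigl(x(\log x)^{\Re\chi(2)-2}\bigr)$, and for $\chi(2)\neq1$ one has $\Re\chi(2)\le\cos(2\pi/t)$, so this is already $O\bigl(x(\log x)^{-(\alpha_t+1)}\bigr)$ -- the extra logarithm comes for free from the \emph{second-order} error term, not from killing the first-order main term; and (ii) the surviving main terms $G(\chi)N(\log N)^{\chi(2)-1}$ are multiplied by $\frac{1}{p-1}\sum_{r}\overline\chi(r)e_p(ar)$, of modulus $p^{1/2}/(p-1)$, contributing $O\bigl(p^{1/2}N(\log N)^{-\alpha_t}\bigr)$, which is then absorbed into $O\bigl(pN(\log N)^{-(\alpha_t+1)}\bigr)$ because whenever it is not dominated by that term the claimed error bound exceeds the trivial bound $N$ anyway. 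So your plan as written hinges on proving a false (or at least unprovable) identity; you should instead keep the first-order terms, pair them with the Gauss-sum saving in $p$, and let the $(\log N)^{-(\alpha_t+1)}$ come from the Selberg--Delange second-order remainder, at the cost of the factor $p$ from summing over all characters and residues.
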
 
Combining Theorem~\ref{thm:t1} with the main result from~\cite{Bou} we obtain a bound which is nontrivial for $N \ge e^{ct^{1/\alpha_t}}$ for some fixed constant $c$. 
\begin{theorem}
\label{ub m}
Suppose $m$ is odd and sufficiently large, then for all $\varepsilon>0$ there exists $\delta>0$ such that if   $t>m^{\varepsilon}$ then
$$ \max_{(a,m)=1}|T_{a,m}(N)| \ll \left(\frac{1}{m^{\delta}}+t(\log{N})^{-\alpha_t}\right)N.$$
\end{theorem}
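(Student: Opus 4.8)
The plan is to substitute the asymptotic formula of Theorem~\ref{thm:t1} into the claimed estimate. The error term $O(tN(\log N)^{-\alpha_t})$ already furnishes the second summand $t(\log N)^{-\alpha_t}N$, so the whole problem is to bound the main-term coefficient. Since $\zeta(m)=1+O(2^{-m})=O(1)$, it suffices to prove that
\[
\left|\sum_{r=0}^{m-1}H(r,m)S_m(ar)\right|\ll t\,m^{-\delta}
\]
uniformly over $a$ with $(a,m)=1$, for some fixed $\delta=\delta(\varepsilon)>0$. I would first record two elementary facts. Interchanging summation gives $\sum_{r=0}^{m-1}H(r,m)S_m(ar)=\sum_{q\in\mathcal{Q}_m}\frac{h(q)}{q}S_m(a\tau(q))$, and since $H(r,m)\ge 0$ with $\sum_{r}H(r,m)=\sum_{q\in\mathcal{Q}_m}h(q)/q$, the inclusion $\mathcal{Q}_m\subseteq\mathcal{K}$ together with $\mathcal{K}(x)\ll x^{1/2}$ shows $\sum_r H(r,m)=O(1)$ uniformly in $m$. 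I would then split the sum according to $d=(r,m)=(\tau(q),m)$, which always divides $m$.

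For the large divisors I would exploit the sparsity of $\mathcal{Q}_m$. If $(\tau(q),m)=d>D$ then $\tau(q)\ge d>D$, and since every $q\in\mathcal{Q}_m$ is square-full the standard bound $\tau(q)=q^{o(1)}$ forces $q$ to be super-polynomially large in $D$; combined with $\mathcal{K}(x)\ll x^{1/2}$ this makes $\sum_{q\in\mathcal{Q}_m,\,(\tau(q),m)>D}h(q)/q$ smaller than any fixed power of $D$. Using only the trivial bound $|S_m(ar)|\le t$ on this range (which also absorbs the term $r=0$, where $S_m(0)=t$) gives a contribution $\ll tD^{-1}$, say. I would take $D=m^{\beta}$ for a small $\beta>0$ to be chosen.

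For the small divisors $d\le m^{\beta}$ I would reduce to a primitive subgroup sum and invoke Bourgain. Writing $ar=dc$ and $m=dm'$ with $(c,m')=1$, periodicity of $2^n$ modulo $m'$ gives $S_m(ar)=\frac{t}{t_d}S_{m'}(c)$, where $t_d=\mathrm{ord}_{m'}(2)$ and $(c,m')=1$. The key enabling estimate is the order bound $t_d\ge t/d$: writing $t$ and $t_d$ as least common multiples of the orders of $2$ modulo the prime-power components of $m$ and of $m'$, and using $\mathrm{ord}_{\ell^{a}}(2)\mid\ell^{a-b}\,\mathrm{ord}_{\ell^{b}}(2)$, one checks $t\mid d\,t_d$. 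Hence for $d\le m^{\beta}$ we have $t_d\ge t/d>m^{\varepsilon-\beta}\ge(m')^{\varepsilon-\beta}$, so with $\beta<\varepsilon$ and $\varepsilon'=\varepsilon-\beta$ the subgroup $\langle 2\rangle\le(\mathbb{Z}/m'\mathbb{Z})^{*}$ has order exceeding $(m')^{\varepsilon'}$, and the main result of~\cite{Bou} yields $|S_{m'}(c)|\ll t_d\,(m')^{-\delta'}$ for some $\delta'=\delta'(\varepsilon')>0$, uniformly in $c$. Therefore $|S_m(ar)|\ll t\,(m')^{-\delta'}\le t\,m^{-(1-\beta)\delta'}$ on this range, and summing against $H(r,m)$ with $\sum_r H(r,m)=O(1)$ controls it by $\ll t\,m^{-(1-\beta)\delta'}$. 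Choosing $\beta=\varepsilon/2$ and $\delta=\min\{(1-\beta)\delta',\beta\}$ combines the two regimes into $\ll t\,m^{-\delta}$, which is the required bound; uniformity in $a$ is automatic since Bourgain's estimate is uniform over frequencies coprime to the modulus.

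I expect the main obstacle to be the clean application of Bourgain's bound across the whole range of small $d$. Everything hinges on the order inequality $t_d\ge t/d$, which is what lets the single hypothesis $t>m^{\varepsilon}$ propagate to every reduced modulus $m'=m/d$; one must be careful that the precise hypotheses of the estimate of~\cite{Bou} (in particular any requirement that $\langle 2\rangle$ remain large modulo the sub-divisors of $m'$) are met, which again follows from lower bounds of the shape $\mathrm{ord}_{e}(2)\ge te/m$ for $e\mid m'$. The remaining inputs---the convergence $\sum_{q\in\mathcal{Q}_m}h(q)/q=O(1)$ and the square-full sparsity estimate $\tau(q)=q^{o(1)}$ driving the large-$d$ tail---are routine, as is the bookkeeping in optimising $\beta$.
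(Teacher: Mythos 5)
Your proposal is correct and follows essentially the same route as the paper: substitute Theorem~\ref{thm:t1}, split the coefficient sum $\sum_r H(r,m)S_m(ar)$ according to $d=\gcd(r,m)$, use the sparsity of squarefull $q$ with $d\mid\tau(q)$ (the paper's Lemma~\ref{tau cong}, via AM--GM rather than $\tau(q)=q^{o(1)}$) together with the trivial bound $|S_m|\le t$ for $d$ large, and reduce to a primitive sum modulo $m'=m/d$ with the order inequality $t_d\ge t/d$ feeding Bourgain's subgroup bound for $d$ small. If anything you are slightly more careful than the paper, which asserts $t_d\ge t/d$ and the applicability of Bourgain's hypotheses for the reduced modulus without comment.
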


Combining Theorem~\ref{thm:t2} with results from~\cite{Kor} and~\cite{Sk} we get,

\begin{theorem}
\label{ub p}
Suppose $p>2$ is prime and  let 
$$A(t)=\begin{cases}  p^{1/8}t^{-7/18}(\log{p})^{7/6}, \quad  \ \  \  \ t \le p^{1/2}, \\  
 p^{1/4}t^{-23/36}(\log{p})^{7/6}, \quad \  \ p^{1/2}<t \le p^{3/5}(\log{p})^{-6/5}, \\ 
 p^{1/6}t^{-1/2}(\log{p})^{4/3}, \quad \ \ \ \ \  p^{3/5}<t \le p^{2/3}(\log{p})^{-2/3}, \\
p^{1/2}t^{-1}\log{p}, \quad \quad \quad  \ \ \ \ \ \ \ t>p^{2/3}(\log{p})^{-2/3}, \end{cases}$$
then we have
$$\max_{(a,p)=1} |T_{a,p}(N)|\ll \left(A(t)+p(\log{N})^{-(\alpha_t+1)}\right)N.$$
\end{theorem}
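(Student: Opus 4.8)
From Theorem~\ref{thm:t2} the error term is already exactly the second contribution $pN(\log N)^{-(\alpha_t+1)}$ in the asserted bound, so the whole problem reduces to estimating the main term. The plan is therefore to prove, uniformly for $(a,p)=1$,
$$\left|\frac{\zeta(p)}{t}\frac{6}{\pi^2}\sum_{r=0}^{p-1}H(r,p)S_p(ar)\right|\ll A(t),$$
after which $\max_{(a,p)=1}|T_{a,p}(N)|\ll(A(t)+p(\log N)^{-(\alpha_t+1)})N$ follows at once. Since $\zeta(p)\le\zeta(3)=O(1)$ and $6/\pi^2=O(1)$, and since
$$\sum_{r=0}^{p-1}H(r,p)=\sum_{q\in\mathcal{Q}_p}\frac{h(q)}{q}=O(1)$$
(the series converging because $\mathcal{Q}_p\subseteq\mathcal{K}$ and $\mathcal{K}(x)\ll x^{1/2}$, as recorded in the notation), everything comes down to controlling the subgroup sums $S_p(ar)$.

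First I would isolate the term $r=0$, for which $S_p(0)=t$ and whose contribution to the display above is exactly $H(0,p)$. For $q\in\mathcal{Q}_p$ one has $\tau(q)=\prod_{\ell^{\theta}||q}(\theta+1)$ with each factor $\theta+1\in\{3,\dots,p\}$, so $p\mid\tau(q)$ forces some prime to occur with $\ell^{p-1}||q$; hence every such $q$ satisfies $q\ge 2^{p-1}$, and partial summation against $\mathcal{K}(x)\ll x^{1/2}$ gives $H(0,p)\ll 2^{-(p-1)/2}$, which is far smaller than every branch of $A(t)$. The $r=0$ term is thus negligible.

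For the remaining $r$ I would use that, as $n$ runs through $1,\dots,t$, the residues $2^n$ run exactly once through the cyclic subgroup $G=\langle 2\rangle\le\mathbb{F}_p^{*}$ of order $t$, so $S_p(b)=\sum_{x\in G}e_p(bx)$ is an exponential sum over a multiplicative subgroup. Bounding each $|S_p(ar)|$ by $\max_{(b,p)=1}|S_p(b)|$ and using $\sum_{r\neq 0}H(r,p)=O(1)$, the nonzero terms contribute
$$\ll\frac{1}{t}\max_{(b,p)=1}\left|\sum_{x\in G}e_p(bx)\right|.$$
Into this I would insert the available subgroup estimates: Korobov's bound, giving $\max_{(b,p)=1}|\sum_{x\in G}e_p(bx)|\ll p^{1/2}\log p$ in the range $t>p^{2/3}(\log p)^{-2/3}$, which after division by $t$ yields the fourth line $p^{1/2}t^{-1}\log p$ of $A(t)$; and for smaller $t$ the sharper bounds of Shkredov (resting on Heath--Brown and Konyagin), which after division by $t$ produce the first three lines.

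The one genuinely substantive point is this last step: one must quote each of Shkredov's bounds in its proper range and verify that the stated thresholds $p^{1/2}$, $p^{3/5}(\log p)^{-6/5}$ and $p^{2/3}(\log p)^{-2/3}$ are precisely the points where consecutive bounds coincide, so that selecting the best available estimate in each range reproduces the piecewise definition of $A(t)$. (A quick consistency check supports this: at $t=p^{2/3}(\log p)^{-2/3}$ the third line $p^{1/6}t^{-1/2}(\log p)^{4/3}$ equals Korobov's $p^{1/2}t^{-1}\log p$, and the analogous equalities hold at the other two crossovers.) By contrast, the negligibility of $H(0,p)$ is elementary once one observes that $p\mid\tau(q)$ forces $\ell^{p-1}||q$, and the handling of the constants and of $\sum_r H(r,p)=O(1)$ is routine.
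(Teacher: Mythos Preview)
Your proposal is correct and follows essentially the same route as the paper: split off $r=0$ using the elementary lower bound $q\ge 2^{p-1}$ for $q\in\mathcal{Q}_p$ with $p\mid\tau(q)$ (this is the prime case of Lemma~\ref{tau cong}), bound the remaining terms by $\tfrac{1}{t}\max_{(\lambda,p)=1}|S_p(\lambda)|$ via $\sum_{r}H(r,p)=O(1)$, and then insert the subgroup-sum estimates of Korobov and Shkredov. The only presentational difference is that the paper quotes the ready-made piecewise bound for $\max_{(\lambda,p)=1}|S_p(\lambda)|$ from~\cite{Kerr}, whereas you propose to assemble it and check the crossover thresholds yourself; the content is the same.
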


\section{Preliminary results}

We use the decomposition of integers as in~\cite{Se}.
\begin{lemma}
\label{decom}
For integer $m$, any $n \in \mathbb{N}$ may be written uniquely in the form
$$n=sqk$$
with $s\in \mathcal{S}$, $q\in \mathcal{Q}_m$, $k \in \mathcal{M}_m$ and \ $\gcd(q,s)=1$. For such a representation, we have
$$\tau(n)\equiv \tau(s)\tau(q) \pmod m.$$
\end{lemma}
\begin{proof}
We first fix an integer $m$. Given any integer $n$, let $n=p_1^{\alpha_1}\dots p_{j}^{\alpha_j}$ be the prime factorisation of $n$, so that
\begin{equation}
\label{tau m}
\tau(n)=(\alpha_1+1)\dots (\alpha_j+1).
\end{equation}
Let $\beta_i$ be the remainder when $\alpha_i$ is divided by $m$. Then for some $k\in \mathcal{M}_m$ we have $$n=k p_1^{\beta_1}\dots b_j^{\beta_j}=k\displaystyle\prod_{\beta_i=1}p_i^{\beta_i}\displaystyle\prod_{\beta_{i}\neq 1}p_i^{\beta_i}=ksq,$$
with $s\in \mathcal{S}$, $q\in \mathcal{Q}_m$ and $\gcd(q,s)=1$. Finally, we have from~\eqref{tau m}
$$\tau(n)\equiv(\beta_1+1)\dots(\beta_j+1)\equiv \tau(qs) \equiv \tau(q)\tau(s) \pmod m,$$
since $\gcd(q,s)=1$. 
\end{proof}
Given integer $k$, we let $\omega(k)$ denote the number of distinct prime factors of $k$. The proof of the following Lemma is well known (see~\cite{Gr} and references therein for sharper results and generalizations). We provide a standard proof.
\begin{lemma}
\label{finite primes}
Suppose $q$ is squarefree and let $A_q(X)$ count the number of integers $n\le X$ such that any prime  dividing $n$ also divides $q$, then
$$A_q(X)\ll \frac{1}{\omega(q)!}\left(\log{X}+2\omega(q)^{1/2}\log{q}\right)^{\omega(q)}.$$
\end{lemma}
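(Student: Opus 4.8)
The plan is to count integers $n \le X$ whose prime factors are all among the $\omega(q)$ distinct primes dividing $q$. Such an $n$ is determined by choosing exponents $e_1, \dots, e_{\omega(q)} \ge 0$ on those primes, so the count equals the number of lattice points $(e_1, \dots, e_{\omega(q)})$ with $p_1^{e_1} \cdots p_{\omega(q)}^{e_{\omega(q)}} \le X$, i.e.\ with $\sum_i e_i \log p_i \le \log X$. Writing $\omega = \omega(q)$, the strategy is to bound this count by the volume of a suitable simplex, with a correction accounting for the discreteness of the lattice.

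First I would reduce to a clean volume estimate. Since each prime satisfies $p_i \ge 2$, we have $\log p_i \ge \log 2$, so every admissible exponent vector lies in the region $\{x \in \mathbb{R}^\omega_{\ge 0} : \sum_i x_i \log p_i \le \log X\}$. The number of lattice points in such a region is comparable to its volume once we inflate the region slightly to absorb the unit cube attached to each lattice point; the key point is that associating to each integer point the unit cube $[e_1, e_1+1] \times \cdots$ shifts the linear form by at most $\sum_i \log p_i = \log q$. Hence each of the $A_q(X)$ integers contributes a disjoint unit cube contained in the enlarged simplex $\{x \ge 0 : \sum_i x_i \log p_i \le \log X + \log q\}$. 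The volume of the simplex $\{x \ge 0 : \sum_i x_i \log p_i \le V\}$ is exactly $V^\omega / (\omega! \prod_i \log p_i) \le V^\omega/(\omega! (\log 2)^\omega)$, which already yields a bound of the shape $(\log X + \log q)^\omega / \omega!$ up to the factor $(\log 2)^{-\omega}$.

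The remaining task is cosmetic: to massage $(\log X + \log q)^\omega (\log 2)^{-\omega}$ into the stated form $(\log X + 2\omega^{1/2}\log q)^\omega$. This is where I would use the standard trick of splitting the count by $\Omega(n)$, the number of prime factors with multiplicity, rather than estimating the full simplex at once; summing $\binom{j + \omega - 1}{\omega - 1}$ over the relevant range of $j$ and applying the multinomial/AM--GM inequality produces the $2\omega^{1/2}$ weighting on $\log q$, which is sharper in the regime where $\log X$ and $\omega \log q$ are comparable. Concretely, grouping integers by the number of prime factors counted with multiplicity and bounding the number of each via a binomial coefficient, then estimating $\sum_j \binom{j+\omega-1}{\omega-1} (\log X)^{-j}$-type sums, lets one factor the $\log q$ contribution out with the explicit $\omega^{1/2}$ savings coming from Stirling applied to $\binom{2\omega}{\omega}$.

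The main obstacle I anticipate is obtaining the precise constant $2$ and the $\omega^{1/2}$ exponent on $\log q$ rather than a cruder bound like $(\log X + \omega \log q)^\omega$. This requires carefully combining the combinatorial count (choosing which primes and with what multiplicities) with a sharp form of Stirling's estimate, and verifying the inequality $(a+b)^\omega \le \text{const}^\omega \cdot (\text{refined expression})$ uniformly in $\omega$. The volume heuristic handles the main term trivially, but extracting the advertised $\omega^{1/2}$ factor --- rather than the naive $\omega$ --- is the delicate step and is the reason the lemma is phrased with that particular correction term.
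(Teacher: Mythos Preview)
Your basic approach---count lattice points in the simplex $\{x\ge 0:\sum_i x_i\log p_i\le\log X\}$ by covering with unit cubes and bounding by the volume of a slightly enlarged simplex---is exactly the paper's. Your enlargement by $\sum_i\log p_i=\log q$ is in fact \emph{tighter} than the paper's, which uses Cauchy--Schwarz on the cube's diameter to enlarge by $2\omega^{1/2}\|P\|_2\le 2\omega^{1/2}\log q$. So the stated form with $2\omega^{1/2}\log q$ is \emph{weaker} than what you already have, not sharper; there is nothing to massage in that direction.

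The genuine slip is in the denominator. You write $\prod_i\log p_i\ge(\log 2)^\omega$ and then carry an unwanted factor $(\log 2)^{-\omega}$, which you attempt to remove via a vague $\Omega(n)$/Stirling argument. That detour is unnecessary and the sketch you give for it does not obviously work. The correct observation is that among the primes $p_1,\dots,p_\omega$ at most one is equal to $2$; every other prime satisfies $\log p\ge\log 3>1$. Hence $\prod_i\log p_i\ge\log 2$ with an \emph{absolute} lower bound, and the volume computation gives directly
\[
A_q(X)\le \frac{(\log X+\log q)^{\omega}}{\omega!\,\prod_i\log p_i}\le \frac{1}{\log 2}\cdot\frac{(\log X+\log q)^{\omega}}{\omega!}\ll\frac{(\log X+2\omega^{1/2}\log q)^{\omega}}{\omega!},
\]
the last inequality being trivial since $\log q\le 2\omega^{1/2}\log q$. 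Drop the $\Omega(n)$ paragraph and make this one-line correction, and your proof is complete (and slightly cleaner than the paper's).
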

\begin{proof}
Suppose $p_1,\dots p_N$ are the distinct primes dividing $q$. Let $\langle .\  , .  \rangle$ denote the standard inner product on $\mathbb{R}^N$, $||.||$ the Euclidian norm and let $\mathbb{R}_{+}^N\subset \mathbb{R}^N$ be the set of all points with nonnegative coordinates. Let $P=(\log{p_1},\dots,\log{p_N})$ and 
$$\mathcal{P}(Y)=\{ \mathbf{x}\in \mathbb{R}_{+}^N : \langle \mathbf{x},P\rangle \le Y \}, $$
so that
\begin{equation}
\label{A intersect}
A_q(X)= \#(\mathbb{Z}^N \cap \mathcal{P}(\log{X})).
\end{equation}
Let  $\mathcal{C}$ denote the set of cubes of the form 
$$\left[j_1, j_1+1\right] \times \dots \times \left[j_N, j_N+1\right], \quad j_1,\dots,j_N \in \mathbb{Z},$$
which intersect $\mathcal{P}(\log{X})$, so that by~\eqref{A intersect} we have
\begin{equation}
\label{Aq}
A_q(X)\le \# \mathcal{C}.
\end{equation}
Suppose $\mathcal{B}\in \mathcal{C}$, then  for some $\mathbf{a}\in \mathbb{R}^N$ independent of $\mathcal{B}$ we have 
\begin{equation}
\label{subset inc}
\mathcal{B}\subset \mathcal{P}(\log{X}+2N^{1/2}||P||)+\mathbf{a}.
\end{equation}
 Since choosing $\mathbf{x}_0\in \mathcal{B} \cap \mathcal{P}(\log{X})$  we may write any $\mathbf{x}\in \mathcal{B}$ as $\mathbf{x}=\mathbf{x}_0+\mathbf{x}'$ with $||\mathbf{x}'||\le N^{1/2}.$ Hence by the Cauchy-Schwarz inequality and the assumption $\mathbf{x}_0 \in \mathcal{B}$ we have
$$\langle \mathbf{x},P\rangle = \langle \mathbf{x}_0,P\rangle+\langle \mathbf{x}',P\rangle \le \log{X}+N^{1/2}||P||,$$
$$\langle \mathbf{x},P\rangle \ge -|\langle \mathbf{x'},P\rangle|\ge  -N^{1/2}||P||,$$
  so that~\eqref{subset inc} holds with $\mathbf{a}=-(N^{1/2}||P||,\dots,N^{1/2}||P||)$.  Hence from~\eqref{Aq}
\begin{align*}
A_q(X)\le \# \mathcal{C}\le \displaystyle\int_{\substack{\langle \mathbf{x},P \rangle \le \log{X}+2N^{1/2}||P|| \\ \mathbf{x}\in \mathbb{R}_+^N}}1 \  d\mathbf{x}= \frac{(\log{X}+2N^{1/2}||P||)^{N}}{N!\log{p_1}\dots \log{p_N}},
\end{align*}
and the result follows since  $N=\omega(q)$ and $||P||\le \log{p_1}+\dots+\log{p_N}=\log{q}$ since $q$ is squarefree.
\end{proof}
 We use the following result of Selberg \cite{Se1}, for related and more precise results see ~\cite[II.6]{Te}.
\begin{lemma}
\label{selberg}
For any $z \in \mathbb{C}$,
$$\displaystyle\sum_{\substack{n\leq x \\ n\in \mathcal{S}}}z^{\omega(n)}=G(z)x(\log{x})^{z-1}+O\left(x(\log{x})^{\Re({z})-2}\right),$$
with $$G(z)=\frac{1}{\Gamma(z)}\displaystyle\prod_{p}\left(1+\frac{z}{p}\right)\left(1-\frac{1}{p}\right)^{z},$$
and the implied constant is uniform for all $|z|=1.$
\end{lemma}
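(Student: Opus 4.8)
The plan is to prove this by the Selberg--Delange method, exploiting that on squarefree integers $z^{\omega(n)}$ is multiplicative and has an especially clean Dirichlet series. First I would form the generating series
$$F(s,z)=\sum_{n\in\mathcal{S}}\frac{z^{\omega(n)}}{n^s}=\prod_p\left(1+\frac{z}{p^s}\right),\qquad \Re(s)>1,$$
and isolate its singular behaviour by comparison with $\zeta(s)^z=\prod_p(1-p^{-s})^{-z}$, writing $F(s,z)=\zeta(s)^zG(s,z)$ with
$$G(s,z)=\prod_p\left(1+\frac{z}{p^s}\right)\left(1-\frac{1}{p^s}\right)^{z}.$$
Expanding each local factor, the $p^{-s}$ contributions cancel and one is left with $1-\tfrac{z(z+1)}{2}p^{-2s}+O(p^{-3s})$, so the product for $G(s,z)$ converges absolutely and is analytic on $\Re(s)>1/2$, uniformly bounded for $|z|=1$. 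In particular $G(1,z)=\prod_p(1+z/p)(1-1/p)^z$, so that $G(z)=G(1,z)/\Gamma(z)$.

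Next I would apply a truncated Perron formula, expressing the partial sum as $\tfrac{1}{2\pi i}\int_{c-iT}^{c+iT}F(s,z)\tfrac{x^s}{s}\,ds$ plus a controlled truncation error, with $c=1+1/\log x$. The heart of the argument is to deform this vertical segment into a contour that stays inside the classical zero-free region $\sigma>1-c_0/\log(|t|+2)$ and wraps around a small Hankel loop about the branch point $s=1$, where $\zeta(s)^z$ is single-valued on the plane slit along $(-\infty,1]$. Writing $\zeta(s)=(s-1)^{-1}(1+O(s-1))$ and $G(s,z)=G(1,z)(1+O(s-1))$ near $s=1$, the dominant part of the loop integral is $G(1,z)\,x\cdot\tfrac{1}{2\pi i}\int_{\mathcal{H}}(s-1)^{-z}x^{s-1}\,ds$. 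The substitution $w=(s-1)\log x$ turns this into $(\log x)^{z-1}$ times the Hankel representation $\tfrac{1}{2\pi i}\int_{\mathcal{H}}w^{-z}e^{w}\,dw=1/\Gamma(z)$, producing the main term $\tfrac{G(1,z)}{\Gamma(z)}x(\log x)^{z-1}=G(z)x(\log x)^{z-1}$.

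Finally I would collect the error terms. The $O(s-1)$ corrections to $\zeta(s)^z$ and to $G(s,z)$ near the branch point yield a secondary contribution of size $x(\log x)^{z-2}$, and since $|(\log x)^{z-2}|=(\log x)^{\Re(z)-2}$ this matches the stated error; the remaining horizontal and distant vertical pieces of the deformed contour, bounded using $|\zeta(s)|\ll\log(|t|+2)$ in the zero-free region together with the decay of $x^s$, are of smaller order, as is the Perron truncation error for a suitable $T$. The step I expect to be most delicate is securing uniformity over the whole circle $|z|=1$: one must verify that the implied constants in the local expansion, the bound on $G(s,z)$, and the Hankel estimate are all independent of $z$. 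Since $|z|=1$ confines $z$ to a compact set on which $1/\Gamma(z)$ is bounded (merely vanishing at $z=-1$) and $\Re(z)\in[-1,1]$, this uniformity can be arranged, although controlling $|\zeta(s)^z|=|\zeta(s)|^{\Re z}\exp(-\Im z\,\arg\zeta(s))$ uniformly along the contour requires careful handling of $\arg\zeta(s)$.
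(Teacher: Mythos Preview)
The paper does not supply a proof of this lemma: it is quoted as a result of Selberg~\cite{Se1}, with a pointer to Tenenbaum~\cite[II.6]{Te} for refinements. Your outline via the Selberg--Delange method is the standard route and is correct in its essentials.

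Note, incidentally, that the paper later quotes a general form of exactly this machinery as Lemma~\ref{Selberg} (Theorem~7.18 of Montgomery--Vaughan). The present statement is the special case obtained by taking $a_z(n)=z^{\omega(n)}\mathbf{1}_{\mathcal{S}}(n)$ and $F(s,z)=\prod_p(1+z/p^s)(1-1/p^s)^z$; the hypothesis $\sum_n|b_z(n)|(\log n)^3/n=O(1)$ uniformly for $|z|\le 1$ follows at once from the absolute convergence of that product in $\Re s>1/2$ that you already established. So an alternative to carrying out the Hankel contour by hand is simply to invoke that black box, which is closer in spirit to what the paper does. Your direct approach is fine too; the caution you flag about controlling $\arg\zeta(s)$ along the shifted contour is warranted but not an obstruction, since in the classical zero-free region one has $\arg\zeta(s)=O(\log(|t|+2))$, which suffices for uniformity over the compact set $|z|=1$.
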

We combine Lemma~\ref{finite primes} and  Lemma~\ref{selberg} to get a sharper version of ~\cite[Lemma~1]{Se}.
\begin{lemma}
\label{Mq}
For integers $q,r,t$  let
$$M(x,q,r,t)=\#\{ \ n\leq x \ : \ n\in \mathcal{S}, \quad \omega(n) \equiv r\pmod t, \quad  (n,q)=1 \}.$$
Then for $x\ge q$ we have
$$M(x,q,r,t)=\frac{6h(q)}{\pi^2t}x+O\left(x^{1/2}(e^4\log{x})^{\omega(q)}\right)+O\left( x (\log{x})^{-\alpha_t}\log \log{q}\right).$$ 
\end{lemma}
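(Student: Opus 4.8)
The plan is to detect the congruence $\omega(n)\equiv r\pmod t$ by additive characters and thereby reduce everything to Selberg's estimate (Lemma~\ref{selberg}). Setting $\eta=e^{2\pi i/t}$ and
$$F(x,q,z)=\sum_{\substack{n\le x,\ n\in\mathcal{S}\\(n,q)=1}}z^{\omega(n)},$$
orthogonality of the characters $j\mapsto\eta^{j\omega(n)}$ gives
$$M(x,q,r,t)=\frac1t\sum_{j=0}^{t-1}\eta^{-jr}F(x,q,\eta^j).$$
The term $j=0$ is $F(x,q,1)=\#\{n\le x:n\in\mathcal{S},\ (n,q)=1\}$ and produces the main term, while the terms $j\neq 0$ are to be shown small. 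A crucial point is that the $j=0$ term must be handled \emph{separately} by an elementary squarefree count rather than through Lemma~\ref{selberg}: since $\alpha_t$ can exceed $1$ (already at $t=2,3$), the error $O(y(\log y)^{-1})$ that Lemma~\ref{selberg} yields at $z=1$ would be too weak to fit inside the claimed $(\log x)^{-\alpha_t}$ saving, whereas a direct count gives a power saving that is absorbed into the $x^{1/2}$ term.

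To apply Lemma~\ref{selberg}, which carries no coprimality restriction, I would remove the condition $(n,q)=1$ multiplicatively. Comparing Euler products gives $\sum_{n\in\mathcal{S},\,(n,q)=1}z^{\omega(n)}n^{-s}=\prod_{p|q}(1+zp^{-s})^{-1}\sum_{n\in\mathcal{S}}z^{\omega(n)}n^{-s}$, so, writing $c_a$ for the multiplicative function supported on integers all of whose prime factors divide $q$, with $c_{p^k}=(-z)^k$ (hence $|c_a|=1$ when $|z|=1$), one obtains the exact finite identity
$$F(x,q,z)=\sum_{\substack{a\le x\\ p|a\Rightarrow p|q}}c_a\,\Sigma_z(x/a),\qquad \Sigma_z(y)=\sum_{\substack{n\le y\\ n\in\mathcal{S}}}z^{\omega(n)}.$$
This reduces everything to the clean sums $\Sigma_z$ controlled by Lemma~\ref{selberg}, at the cost of a sum over the $q$-smooth numbers $a$, whose count is governed by Lemma~\ref{finite primes}.

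For $j\neq 0$ I would split this identity at $a\le x^{1/2}$ and $a>x^{1/2}$. When $a\le x^{1/2}$ we have $\log(x/a)\asymp\log x$, so Lemma~\ref{selberg} together with $|G(\eta^j)|\ll1$ gives a contribution $\ll x(\log x)^{\cos(2\pi j/t)-1}\sum_{p|a\Rightarrow p|q}a^{-1}$, where $\sum_{p|a\Rightarrow p|q}a^{-1}=\prod_{p|q}(1-1/p)^{-1}=q/\varphi(q)\ll\log\log q$; the Selberg error term contributes the same with an extra factor $(\log x)^{-1}$. When $a>x^{1/2}$ the trivial bound $|\Sigma_z(x/a)|\le x/a\le x^{1/2}$ with $|c_a|=1$ and Lemma~\ref{finite primes} gives $\ll x^{1/2}A_q(x)\ll x^{1/2}(e^4\log x)^{\omega(q)}$, using $\tfrac1{\omega(q)!}(\log x+2\omega(q)^{1/2}\log q)^{\omega(q)}\ll(e^4\log x)^{\omega(q)}$ for $x\ge q$. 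Thus $|F(x,q,\eta^j)|\ll x(\log x)^{\cos(2\pi j/t)-1}\log\log q+x^{1/2}(e^4\log x)^{\omega(q)}$. The $j=0$ term is treated by the same splitting but with $\Sigma_1(y)=\tfrac6{\pi^2}y+O(y^{1/2})$, giving $F(x,q,1)=\tfrac6{\pi^2}h(q)x+O(x^{1/2}(e^4\log x)^{\omega(q)})$ since $\sum_a c_a/a=\prod_{p|q}(1+1/p)^{-1}=h(q)$.

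Inserting these into the character expansion, the $j=0$ term yields $\tfrac{6h(q)}{\pi^2 t}x$ together with an $O(x^{1/2}(e^4\log x)^{\omega(q)})$ error; the $x^{1/2}$ errors coming from the $t$ values of $j$ total, after division by $t$, again $O(x^{1/2}(e^4\log x)^{\omega(q)})$; and the leading error from $j\neq 0$ is $\frac1t\sum_{j=1}^{t-1}x(\log x)^{\cos(2\pi j/t)-1}\log\log q\le\max_{1\le j\le t-1}x(\log x)^{\cos(2\pi j/t)-1}\log\log q=x(\log x)^{-\alpha_t}\log\log q$, the maximum occurring at $j=1,t-1$ by the definition of $\alpha_t$ in~\eqref{alpha def}. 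This gives the asserted formula. The main obstacle is the bookkeeping between the two competing error terms: one must route the $j=0$ contribution through the elementary squarefree count rather than Lemma~\ref{selberg} in order to keep its error inside the $x^{1/2}$ term, must verify that the $q$-smooth tail of the convolution is controlled by Lemma~\ref{finite primes} with precisely the constant $e^4$, and must bound the Euler tails $q/\varphi(q)\ll\log\log q$ uniformly in $q$.
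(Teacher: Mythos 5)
Your proposal is correct and follows essentially the same route as the paper: orthogonality over the additive characters mod $t$, removal of the coprimality condition by Dirichlet convolution against the $q$-smooth coefficients $c_a$, Lemma~\ref{selberg} for the nonprincipal terms (where your split at $a\le x^{1/2}$ is if anything slightly more careful than the paper's direct bound, since it keeps $\log(x/a)\asymp\log x$), the elementary squarefree count for $j=0$, and Lemma~\ref{finite primes} for the smooth tail. The one substantive step you defer to "bookkeeping" --- showing $x\sum_{a>x^{1/2}}|c_a|/a\ll x^{1/2}(e^4\log x)^{\omega(q)}$ so that the partial sum $\sum_{a\le x^{1/2}}c_a/a$ may be completed to $h(q)$ --- is in fact the bulk of the paper's computation (an estimate for $\int_y^\infty A_q(u)u^{-2}\,du$ carried out via a binomial expansion, a Cauchy-integral representation of $\int_y^\infty (\log u)^n u^{-2}\,du$, and two applications of Stirling's formula), so while nothing in your plan would fail, that verification is where the real work lies.
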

\begin{proof}
Suppose first $q$ is squarefree and let 
$$S(a,x)=\displaystyle\sum_{\substack{n\leq x \\ n\in \mathcal{S}}}e_t{(a\omega(n))},$$
and
$$S_1(a,q,x)=\displaystyle\sum_{\substack{n\leq x \\ n\in \mathcal{S} \\ (n,q)=1}}e_t{(a\omega(n))}.$$
Since the numbers $e_m(a\omega(n))$ with $(n,q)=1$ and $n\in \mathcal{S}$ are the coefficients of the Dirichlet series
$$\displaystyle\prod_{p  \nmid q}\left(1+\frac{e_t(a)}{p^{s}}\right)=\displaystyle\prod_{p |q}\frac{1}{\left(1+\frac{e_t(a)}{p^s}\right)}\displaystyle\prod_{p}\left(1+\frac{e_t(a)}{p^{s}}\right),$$
we let the numbers $a_n$ and $b_n$ be defined by 
\begin{align*}
\displaystyle\prod_{p |q}\frac{1}{\left(1+\frac{e_t(a)}{p^s}\right)}&=\displaystyle\sum_{n=1}^{\infty}\frac{a_n}{n^s}, \\
\displaystyle\prod_{p}\left(1+\frac{e_t(a)}{p^{s}}\right)&=\displaystyle\sum_{n=1}^{\infty}\frac{b_n}{n^s},
\end{align*}
so that $$S(a,x)=\sum_{n\le x}b_n, $$
 and
\begin{align*}
S_1(a,q,x)=\displaystyle\sum_{n\leq x}\displaystyle\sum_{d_1d_2=n}b_{d_1}a_{d_2}=\displaystyle\sum_{n\le x}a_n S(a,x/n). 
\end{align*}

Consider when $a\neq 0$,  by Lemma~\ref{selberg}
\begin{align*}
\displaystyle\sum_{n\le x}a_nS(a,x/n)&=G(e_t(a))x\displaystyle\sum_{n\le x}\frac{a_n}{n}(\log{(x/n)})^{e_t(a)-1}\\ & \quad +O\left(\displaystyle\sum_{n\le x}|a_n|\frac{x}{n}(\log{x/n})^{\cos{(2\pi /t)}-2}\right) \\
&\ll x (\log{x})^{-(1-\cos(2\pi  /t))}\sum_{n\le x}\frac{|a_n|}{n} \\
&\ll x (\log{x})^{-\alpha_t}\prod_{p|q}\left(1-\frac{1}{p}\right)^{-1},
\end{align*}
and since $$\displaystyle\prod_{p|q}\left(1-\frac{1}{p}\right)^{-1}=\frac{q}{\phi{(q)}}\ll \log \log {q},$$
where $\phi$ is Euler's totient function, we get
\begin{equation}
\label{sa}
S_1(a,q,x) \ll  x (\log{x})^{-\alpha_t}\log \log {q}.
\end{equation}
For $a=0$, by~\cite[Theorem~334]{HardyWright}
\begin{align*}
S_1(0,q,x)&=\displaystyle\sum_{n\le x}a_n S(0,x/n) \\
&=\frac{6x}{\pi^2}\sum_{n\le x}\frac{a_n}{n}+O\left(x^{1/2}\sum_{n\le x}\frac{|a_n|}{n^{1/2}} \right) \\
&=\frac{6x}{\pi^2}\displaystyle\prod_{p|q}\left(1+\frac{1}{p}\right)^{-1}+O \left(x\sum_{n\ge x}\frac{|a_n|}{n}\right)
+O\left(x^{1/2}\sum_{n\le x}\frac{|a_n|}{n^{1/2}} \right) 
.
\end{align*}
For the first error term, with notation as in Lemma~\ref{finite primes}, we have $$\sum_{n\le t}|a_n|=A_q(t),$$ so that
\begin{align}
\label{step 1}
\sum_{n\ge x}\frac{|a_n|}{n}&\ll \displaystyle\int_{x}^{\infty}\frac{A_q(t)}{t^2}dt \nonumber \\
&\ll \frac{1}{\omega(q)!}\displaystyle\int_{x}^{\infty}\frac{\left(\log{t}+2\omega(q)^{1/2}\log{q}\right)^{\omega(q)}}{t^2}dt,
\end{align}
and
\begin{align}
\label{binom expand}
\displaystyle\int_{x}^{\infty}\frac{\left(\log{t}+2\omega(q)^{1/2}\log{q}\right)^{\omega(q)}}{t^2}dt &=
\displaystyle\sum_{n=0}^{\omega(q)}\binom{\omega(q)}{n}\left(2\omega(q)^{1/2}\log{q}\right)^{\omega(q)-n}
\displaystyle\int_{x}^{\infty}\frac{\left(\log{t}\right)^{n}}{t^2}dt. 
\end{align}
The integral  $$\displaystyle\int_{x}^{\infty}\frac{\left(\log{t}\right)^{n}}{t^2}dt,$$ is the $n$-th derivative of the function
$$H(z)=\displaystyle\int_{x}^{\infty}t^{z-2}dz=\frac{x^{z-1}}{1-z},$$
evaluated at $z=0$. Hence by Cauchy's Theorem, letting $\gamma \subset \mathbb{C}$ be the circle centered at 0 with radius $1/\log{x}$ we have
\begin{align*}
\displaystyle\int_{x}^{\infty}\frac{\left(\log{t}\right)^{n}}{t^2}dt=\frac{n!}{2\pi i}\displaystyle\int_{\gamma}\frac{x^{z-1}}{1-z}\frac{1}{z^{n+1}}dz\ll \frac{n!(\log{x})^n}{x}.
\end{align*}
Hence by~\eqref{step 1} and~\eqref{binom expand}
\begin{align*}
\sum_{n\ge x}\frac{|a_n|}{n}\ll \frac{1}{x}\frac{\left(2\omega(q)^{1/2}\log{q}\right)^{\omega{(q)}}}{\omega(q)!}\sum_{n=0}^{\omega(q)}\binom{\omega(q)}{n}n!\left(\frac{\log{x}}{\omega(q)^{1/2}\log{q}}\right)^n,
\end{align*}
and by Stirling's formula~\cite[Equation~B.26]{MgVu}
\begin{align*}
\sum_{n=0}^{\omega(q)}\binom{\omega(q)}{n}n!\left(\frac{\log{x}}{\omega(q)^{1/2}\log{q}}\right)^n&\ll
\sum_{n=0}^{\omega(q)}\binom{\omega(q)}{n}n^{1/2}\left(\frac{n}{e}\right)^n\left(\frac{\log{x}}{\omega(q)^{1/2}\log{q}}\right)^n \\
&\le \omega(q)^{1/2}\sum_{n=0}^{\omega(q)}\binom{\omega(q)}{n}\left(\frac{\omega{(q)}^{1/2}\log{x}}{e\log{q}}\right)^n \\
&\ll  \omega(q)^{1/2}\left(\frac{\omega(q)^{1/2}\log{x}}{e\log{q}}+1\right)^{\omega{(q)}},
\end{align*}
so that
\begin{align*}
\sum_{n\ge x}\frac{|a_n|}{n}&\ll\frac{1}{x}\frac{\omega(q)^{1/2}\left(2\omega(q)^{1/2}\log{q}\right)^{\omega{(q)}}}{\omega(q)!} \left(\frac{\omega(q)^{1/2}\log{x}}{e\log{q}}+1\right)^{\omega{(q)}}.
\end{align*}
By another application of Stirling's formula,
\begin{align*}
\sum_{n\ge x}\frac{|a_n|}{n}&\ll \frac{2^{\omega(q)}}{x}\left(\frac{e\log{q}}{\omega{(q)}^{1/2}}\right)^{\omega(q)}\left(\frac{\omega(q)^{1/2}\log{x}}{e\log{q}}+1\right)^{\omega{(q)}} \\
&\ll \frac{2^{\omega(q)}}{x}\left(\log{x}+\frac{e\log{q}}{\omega{(q)}^{1/2}}\right)^{\omega(q)} \\
&\ll2^{\omega(q)}\left(1+\frac{3}{\omega(q)^{1/2}}\right)^{\omega(q)}\frac{(\log{x})^{\omega(q)}}{x} \\
&\ll 2^{\omega(q)}e^{3\omega(q)^{1/2}}\frac{(\log{x})^{\omega(q)}}{x}\ll e^{4\omega(q)}\frac{(\log{x})^{\omega(q)}}{x},
\end{align*}
which gives
\begin{align*}
S_1(0,q,x)=\frac{6h(q)}{\pi^2}x+O\left((e^4\log{x})^{\omega(q)}\right)+O\left(x^{1/2}\sum_{n\le x}\frac{|a_n|}{n^{1/2}} \right).
\end{align*}
For the last term, 
\begin{align*}
\sum_{n\le x}\frac{|a_n|}{n^{1/2}}\le \prod_{p|q}\left(1-p^{-1/2}\right)^{-1}\le \prod_{p|q}(e^4 \log{x})=(e^4\log{x})^{\omega(q)},
\end{align*}
so that
\begin{equation}
\label{s1}
S_1(0,q,x)=\frac{6h(q)}{\pi^2}x+O\left(x^{1/2}(e^4\log{x})^{\omega(q)}\right).
\end{equation}
Since
\begin{align*}
M(x,q,r,t)&=\frac{1}{t}\displaystyle\sum_{a=0}^{t-1}e_t(-ar)S_1(a,q,x) \\
&=\frac{1}{t}S_1(0,q,x)+\frac{1}{t}\displaystyle\sum_{a=1}^{t-1}e_t(-ar)S_1(a,q,x),
\end{align*}
we have from~\eqref{sa} and~\eqref{s1}
$$M(x,q,r,t)=\frac{6h(q)}{\pi^2t}x+O\left(x^{1/2}(e^4\log{x})^{\omega(q)}\right)+O\left( x (\log{x})^{-\alpha_t}\log \log{q}\right).$$
If $q$ is not squarefree, repeating the above argument with $q$ replaced by its squarefree part gives the general case since the error term is increasing with $q$.
\end{proof}
For complex $s$ we write  $s=\sigma+it$ with both $\sigma$ and $t$ real.
\begin{lemma}
\label{tau and zeta}
Let $m$ be odd and $\chi$ a multiplicative character$\pmod m$. Let 
$$L(s,\chi,\tau)=\displaystyle\sum_{n=1}^{\infty}\frac{\chi(\tau(n))}{n^s},$$
then for $\sigma >1$ we have
$$L(s,\chi,\tau)=\zeta(s)^{\chi(2)}F(s,\chi),$$
 with $F(1,\chi)\neq 0$ and
$$F(s,\chi)=\displaystyle\sum_{n=1}^{\infty}\frac{b(\chi,n)}{n^s},$$
for some constants $b(\chi,n)$ satisfying
\begin{align*}
\displaystyle\sum_{n=1}^{\infty}\frac{|b(\chi,n)|(\log{n})^3}{n}=O(1),
\end{align*}
 uniformly over all characters $\chi$. 
\end{lemma}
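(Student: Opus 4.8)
The plan is to exploit that $n\mapsto\chi(\tau(n))$ is multiplicative. Since $\tau$ is multiplicative and $\chi$ is completely multiplicative, for coprime $a,b$ we have $\chi(\tau(ab))=\chi(\tau(a)\tau(b))=\chi(\tau(a))\chi(\tau(b))$. As $|\chi(\tau(n))|\le1$, the series converges absolutely for $\sigma>1$ and factors as an Euler product
$$L(s,\chi,\tau)=\prod_p L_p(s),\qquad L_p(s)=\sum_{k=0}^{\infty}\frac{\chi(\tau(p^k))}{p^{ks}}=\sum_{k=0}^{\infty}\frac{\chi(k+1)}{p^{ks}},$$
using $\tau(p^k)=k+1$. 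The coefficient of $p^{-s}$ in each $L_p$ is exactly $\chi(2)$, which is what dictates the choice of the factor $\zeta(s)^{\chi(2)}$.

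Because $m$ is odd, $2$ is a unit $\bmod\,m$, so $\chi(2)$ is a root of unity and $|\chi(2)|=1$. For $\sigma>1$ I would set $\zeta(s)^{\chi(2)}=\exp(\chi(2)\log\zeta(s))$ with the branch real on $(1,\infty)$, whose Euler factors are $(1-p^{-s})^{-\chi(2)}$, again with $p^{-s}$-coefficient $\chi(2)$. Define $F(s,\chi)=L(s,\chi,\tau)\zeta(s)^{-\chi(2)}=\prod_p G_p(s)$ with $G_p(s)=L_p(s)(1-p^{-s})^{\chi(2)}$. The binomial coefficients obey $|\binom{\chi(2)}{k}|\le1$, since $\prod_{0\le j<k}|\chi(2)-j|\le\prod_{0\le j<k}(1+j)=k!$. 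Consequently each $G_p(s)=1+\sum_{j\ge2}g_{p,j}p^{-js}$ has \emph{vanishing} $p^{-s}$-term (the two contributions $\chi(2)$ cancel) and $|g_{p,j}|\le j+1$. Hence $\sum_p|G_p(s)-1|$ converges for $\sigma>1/2$, the product converges absolutely there, and expanding it gives $F(s,\chi)=\sum_n b(\chi,n)n^{-s}$ with $b(\chi,\cdot)$ multiplicative, $b(\chi,p)=g_{p,1}=0$, and $|b(\chi,n)|\le\prod_{p^a\|n}(a+1)=\tau(n)$. In particular $b(\chi,n)$ is supported on the powerful numbers $\mathcal K$.

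For the coefficient estimate, all bounds above are independent of $\chi$, so using $(\log n)^3\ll n^{1/4}$ I obtain, uniformly in $\chi$,
$$\sum_{n}\frac{|b(\chi,n)|(\log n)^3}{n}\ll\sum_{n\in\mathcal K}\frac{\tau(n)}{n^{3/4}}=\prod_p\left(1+\sum_{a\ge2}\frac{a+1}{p^{3a/4}}\right)=O(1),$$
the product converging because each factor is $1+O(p^{-3/2})$.

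Finally, for $F(1,\chi)\neq0$: absolute convergence of $\prod_p G_p(1)$ reduces the nonvanishing to $L_p(1)\neq0$ for every $p$. For $p\ge3$ this is immediate, since $|L_p(1)-1|\le\sum_{k\ge1}p^{-k}=1/(p-1)\le1/2$. The delicate case is $p=2$, where the crude triangle inequality degenerates to the useless bound $\ge0$; here I would instead bound the real part, $\Re\,L_2(1)=\sum_{k\ge0}\Re(\chi(k+1))2^{-k}\ge1-\sum_{k\ge1}2^{-k}=0$, with equality only if $\chi(k+1)=-1$ for all $k\ge1$ (using $|\chi|=1$). But $\chi(4)=\chi(2)^2$ forces $\chi(4)=1$ whenever $\chi(2)=-1$, so that alignment is impossible and $\Re\,L_2(1)>0$. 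I expect this last step — ruling out vanishing at the prime $2$ in base $2$, where every estimate is exactly marginal — to be the main obstacle.
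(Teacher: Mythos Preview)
Your argument is correct and follows the same overall strategy as the paper --- write $L$ as an Euler product, factor out $\zeta(s)^{\chi(2)}$, and show the residual product has vanishing $p^{-s}$--term so that it converges absolutely for $\sigma>1/2$ --- but your implementation differs in two useful ways. First, the paper splits $F=F_1F_2$ with $F_2(s,\chi)=\prod_p(1-\chi(2)p^{-s})^{-1}(1-p^{-s})^{\chi(2)}$ and controls $F_2$ by taking its logarithm; you instead expand $(1-p^{-s})^{\chi(2)}$ directly via the binomial series and use the clean bound $\bigl|\binom{\chi(2)}{k}\bigr|\le 1$, which yields the explicit majorant $|b(\chi,n)|\le \tau(n)$ supported on powerful numbers. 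This is tidier than the paper's majorant and makes the uniformity in $\chi$ transparent. Second, for $F(1,\chi)\ne 0$ the paper simply asserts that absolute convergence of the product for $F_1$ and of the series for $\log F_2$ on $\sigma=1$ suffices; this silently assumes each Euler factor of $F_1$ is nonzero, which for $p=2$ is exactly the statement $L_2(1)\ne 0$ that you identify as the crux and prove via the real--part argument together with $\chi(4)=\chi(2)^2$. So your ``main obstacle'' is real, your resolution of it is correct, and it in fact fills a small gap the paper glosses over. One cosmetic point: when you invoke equality in $\Re\,L_2(1)\ge 0$ you write ``using $|\chi|=1$''; strictly $|\chi|\le 1$, but this only helps, since $\chi(k+1)=0$ already gives $\Re(\chi(k+1))=0>-1$.
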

\begin{proof}
Since both $\chi$ and $\tau$ are multiplicative we have for $\sigma>1$,
\begin{align*}
L(s,\chi,\tau)&=\displaystyle\prod_{p}\left(1+\displaystyle\sum_{n=1}^{\infty}\frac{\chi(\tau(p^n))}{p^{ns}}\right) \\
&=\zeta(s)^{\chi(2)}F(s,\chi),
\end{align*}
with
\begin{align*}
F(s,\chi)=\displaystyle\prod_{p}\left(1+\displaystyle\sum_{n=1}^{\infty}\frac{\chi(n+1)}{p^{ns}}\right)\left(1-\frac{1}{p^{s}}\right)^{\chi(2)}.
\end{align*}
We have
\begin{align*}
F(s,\chi)&=\displaystyle\prod_{p}\left(1-\frac{\chi(2)}{p^{s}}\right) \left(1+\frac{\chi(2)}{p^{s}}+\displaystyle\sum_{n=2}^{\infty}\frac{\chi(n+1)}{p^{ns}}\right) \times \\
& \qquad \qquad\qquad \qquad\qquad  \displaystyle\prod_{p}\left(1-\frac{\chi(2)}{p^{s}}\right)^{-1} \left(1-\frac{1}{p^{s}}\right)^{\chi(2)}
\\ &=F_1(s,\chi)F_2(s,\chi),
\end{align*}
where
\begin{align}
\label{F_1}
F_1(s,\chi)&=\displaystyle\prod_{p}\left(1-\frac{\chi(2)}{p^{s}}\right) \left(1+\frac{\chi(2)}{p^{s}}+\displaystyle\sum_{n=2}^{\infty}\frac{\chi(n+1)}{p^{ns}}\right) \nonumber \\ &=
\displaystyle\prod_{p}\left(1+\displaystyle\sum_{n=2}^{\infty}\frac{\chi(n+1)-\chi(2n)}{p^{ns}}\right),
\end{align}
and
\begin{align*}
F_2(s,\chi)&=\displaystyle\prod_{p}\left(1-\frac{\chi(2)}{p^{s}}\right)^{-1} \left(1-\frac{1}{p^{s}}\right)^{\chi(2)}.
\end{align*}
Considering $F_2(s,\chi)$, we have for $\sigma>1$
\begin{align}
\label{F_2}
\log{F_2(s,\chi)}&=\displaystyle\sum_{p}\displaystyle\sum_{n=1}^{\infty} \frac{1}{n}\left(\frac{\chi(2^n)} {p^{ns}}-\frac{\chi(2)}{p^{ns}}\right) \nonumber \\
&=\displaystyle\sum_{p}\displaystyle\sum_{n=2}^{\infty}\frac{\chi(2^n)-\chi(2)}{n}\frac{1}{p^{ns}}.
\end{align}
In the equations~\eqref{F_1} and~\eqref{F_2}, the product and the series converge absolutely on $\sigma=1$ so that $F(1,\chi)\neq 0$. Also since   $|\chi(j)-\chi(k)|\leq 2$ for all integers $k,j$ we see that the coefficents $b(\chi,n)$ in
$$F(s,\chi)=\displaystyle\sum_{n=1}^{\infty}\frac{b(\chi,n)}{n^s},$$
satisfy
$$|b(\chi,n)|\leq c_n,$$
where the numbers $c_n$ are defined by
$$\displaystyle\prod_{p}\left(1+\displaystyle\sum_{n=2}^{\infty}\frac{2}{p^{ns}}\right)\exp\left(\displaystyle\sum_{p}\displaystyle\sum_{n=2}^{\infty}\frac{2}{n}\frac{1}{p^{ns}}\right)=\displaystyle\sum_{n=1}^{\infty}\frac{c_n}{n^s}.$$
The function defined by the above formula converges uniformly in any halfplane $\sigma\geq \sigma_0 >1/2$, so that
$$\displaystyle\sum_{n\leq X}c_n=O(X^{1/2+\varepsilon})$$
and the last statement of the Lemma follows by partial summation.
\end{proof}
The following is~\cite[Theorem~7.18]{MgVu}.
\begin{lemma}
\label{Selberg}
Suppose for each complex $z$ we have a sequence $(b_z(n))_{n=1}^{\infty}$ such that the sum
$$\displaystyle\sum_{n=1}^{\infty}\frac{|b_z(n)|(\log{n})^{2R+1}}{n},$$ is uniformly bounded for $|z|\leq R$ and for $\sigma\ge 1$ let 
$$F(s,z)=\displaystyle\sum_{n=1}^{\infty}\frac{b_z(m)}{m^s}.$$
Suppose for $\sigma > 1$ we have 
$$\zeta(s)^zF(s,z)=\displaystyle\sum_{n=1}^{\infty}\frac{a_z(n)}{n^s},$$
for some $a_z(n)$ and let $S_z(x)=\displaystyle\sum_{n\leq x}a_z(n).$ Then for $x\ge 2$, uniformly over all $|z|\leq R$,
$$S_z(x)=\frac{F(1,z)}{\Gamma(z)}x(\log{x})^{z-1}+O(x(\log{x})^{\Re(z)-2}).$$

\end{lemma}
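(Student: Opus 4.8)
The plan is to establish this by the Selberg--Delange method, treating $\zeta(s)^z$ as the sole source of the singularity at $s=1$ and $F(s,z)$ as a well-behaved analytic factor. First I would record the analytic input supplied by the hypotheses: since $\sum_n |b_z(n)|(\log n)^{2R+1}/n$ is uniformly bounded for $|z|\le R$, the series $F(s,z)$ converges absolutely and defines a holomorphic, uniformly bounded function on the closed half-plane $\sigma\ge 1$, and differentiating in $s$ brings down factors of $\log n$, so its derivatives up to a fixed order are likewise controlled there uniformly in $z$. Writing $G(s)=(s-1)\zeta(s)$, which is holomorphic near $s=1$ with $G(1)=1$, we have locally $\zeta(s)^z=(s-1)^{-z}G(s)^z$, exhibiting the branch-point singularity explicitly.

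Next I would apply a truncated Perron formula to write
$$S_z(x)=\frac{1}{2\pi i}\int_{c-iT}^{c+iT}\zeta(s)^z F(s,z)\,\frac{x^s}{s}\,ds+E,$$
with $c=1+1/\log x$ and $T$ a suitable power of $\log x$. The truncation and tail error $E$ is bounded using the absolute convergence of the coefficients, and is absorbed into the stated $O$-term; uniformity in $z$ follows from the uniform bound on the coefficient sums.

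The heart of the argument is the contour deformation. Using a classical zero-free region $\sigma\ge 1-c/\log(|t|+2)$ for $\zeta$, I would move the line of integration to a truncated Hankel contour $\mathcal{H}$ that enters from the left along the real axis, loops around $s=1$ at radius $1/\log x$, and returns, with horizontal connectors at height $\pm T$; in this region a single-valued branch of $\zeta(s)^z$ is fixed. The horizontal connectors and the far-left vertical segment decay by a negative power of $\log x$ below the main term and go into the error. On the loop I substitute $s=1+w/\log x$, expand $G(s)^z F(s,z)=F(1,z)+O(|s-1|)$ about $s=1$, and invoke the Hankel representation
$$\frac{1}{\Gamma(z)}=\frac{1}{2\pi i}\int_{\mathcal{H}}e^{w}w^{-z}\,dw$$
to evaluate the leading integral $\tfrac{1}{2\pi i}\int (s-1)^{-z}x^s\,ds/s$ as $\tfrac{x(\log x)^{z-1}}{\Gamma(z)}(1+o(1))$. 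This yields the main term $\tfrac{F(1,z)}{\Gamma(z)}x(\log x)^{z-1}$, while the linear term in the Taylor expansion of $G(s)^z F(s,z)$ contributes $O(x(\log x)^{\Re(z)-2})$.

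The step I expect to be the main obstacle is securing uniformity in $z$ over $|z|\le R$ simultaneously with the sharp one-power-of-$\log$ saving. This requires a uniform bound $|\zeta(s)^z|\ll |s-1|^{-\Re(z)}$ on the loop together with a consistent choice of branch, and uniform control of the Taylor coefficients of $F(s,z)$ at $s=1$ and of its growth up to height $T$; this is precisely where the weighted coefficient hypothesis with exponent $2R+1$ is consumed, guaranteeing enough smoothness of $F$ on and just left of $\sigma=1$ throughout the disc $|z|\le R$. Once these uniform estimates are in hand, the remaining bookkeeping — truncation error, horizontal segments, and the secondary Hankel term — is routine.
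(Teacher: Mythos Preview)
The paper does not give a proof of this lemma at all: it is quoted verbatim as \cite[Theorem~7.18]{MgVu}, so there is no ``paper's own proof'' to compare against. Your sketch is essentially the standard Selberg--Delange argument that underlies that theorem, and the overall architecture (Perron, Hankel loop, $1/\Gamma(z)$ via the Hankel integral) is correct.

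There is, however, one genuine technical gap in the route you have chosen. The hypothesis
\[
\sum_{n=1}^{\infty}\frac{|b_z(n)|(\log n)^{2R+1}}{n}<\infty
\]
only gives you absolute convergence of $F(s,z)$ and its first $2R+1$ $s$-derivatives on the closed half-plane $\sigma\ge 1$. It says nothing about analytic continuation of $F(s,z)$ into the region $\sigma<1$. Your contour deformation moves the line of integration to a Hankel loop lying inside the zero-free region $\sigma\ge 1-c/\log(|t|+2)$, and you then write ``expand $G(s)^z F(s,z)=F(1,z)+O(|s-1|)$'' on that loop. But $F(s,z)$ is not known to exist there, so this step is illegitimate as written; the phrase ``smoothness of $F$ on and just left of $\sigma=1$'' conflates $C^k$-regularity on the line with analyticity in a neighbourhood.

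The standard fix, and what Montgomery--Vaughan actually do, is to separate the two factors before integrating: first establish the asymptotic for the partial sums of the coefficients of $\zeta(s)^z$ alone (where the contour argument is legal, since $\zeta(s)^z$ genuinely continues past $\sigma=1$), and then recover $S_z(x)$ by the Dirichlet convolution
\[
S_z(x)=\sum_{m\le x} b_z(m)\sum_{n\le x/m}\tau_z(n),
\]
using the hypothesis on $\sum |b_z(m)|(\log m)^{2R+1}/m$ to control the sum over $m$ and to produce the factor $F(1,z)$. Equivalently, one may Taylor-expand $F(s,z)$ to finite order at $s=1$ \emph{before} deforming, so that each term $\zeta(s)^z(s-1)^k$ has the required continuation; the weighted-logarithm hypothesis is exactly what bounds the Taylor remainder. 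Either reordering repairs the argument.
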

Combining Lemma~\ref{tau and zeta} and Lemma~\ref{Selberg} gives
\begin{lemma}
\label{c tau}
For integer $m$ let $\chi$ be a multiplicative character $\pmod m$ and let $$G(\chi)=\frac{1}{\Gamma(\chi(2))}\displaystyle\prod_{p}\left(\displaystyle\sum_{n=0}^{\infty}\frac{\chi(n+1)}{p^{n}}\right)\left(1-\frac{1}{p}\right)^{\chi(2)}.$$
Then uniformly over all characters $\chi$,
\begin{equation}
\label{neq}
\displaystyle\sum_{n\leq x}\chi(\tau(n))=G(\chi)x(\log{x})^{\chi(2)-1}+O\left(x(\log{x})^{\Re(\chi(2))-2}\right).
\end{equation}
\end{lemma}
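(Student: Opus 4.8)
The plan is to apply Lemma~\ref{Selberg}, the Selberg--Delange estimate, with parameter $R=1$ and $z=\chi(2)$, feeding it the factorization and the uniform coefficient bound furnished by Lemma~\ref{tau and zeta}. Since $m$ is odd we have $\gcd(2,m)=1$, so $\chi(2)$ is a root of unity and $|\chi(2)|=1$; hence $z=\chi(2)$ lies in the disc $|z|\le R$ with $R=1$, and the weight $(\log n)^{2R+1}=(\log n)^{3}$ appearing in the hypothesis of Lemma~\ref{Selberg} is exactly the weight controlled by Lemma~\ref{tau and zeta}.

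First I would reconcile the fact that Lemma~\ref{Selberg} is stated for a \emph{family} $(b_z(n))_z$ indexed by all $z$, whereas Lemma~\ref{tau and zeta} attaches data to a single character. The clean device is to take the constant-in-$z$ family $b_z(n):=b(\chi,n)$, so that $F(s,z)=F(s,\chi)$ is independent of $z$. The hypothesis of Lemma~\ref{Selberg} then becomes $\sum_n |b(\chi,n)|(\log n)^{3}/n=O(1)$, which is precisely the uniform bound of Lemma~\ref{tau and zeta}; in particular the implied constant is independent of $\chi$, which is what will deliver the required uniformity in the conclusion.

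Next I would match the Dirichlet series. Defining $a_z(n)$ by $\zeta(s)^z F(s,\chi)=\sum_n a_z(n)/n^{s}$, specializing to $z=\chi(2)$ and invoking the identity $L(s,\chi,\tau)=\zeta(s)^{\chi(2)}F(s,\chi)$ of Lemma~\ref{tau and zeta} gives $a_{\chi(2)}(n)=\chi(\tau(n))$, so that $S_{\chi(2)}(x)=\sum_{n\le x}\chi(\tau(n))$. Lemma~\ref{Selberg} then yields
$$\sum_{n\le x}\chi(\tau(n))=\frac{F(1,\chi)}{\Gamma(\chi(2))}\,x(\log x)^{\chi(2)-1}+O\!\left(x(\log x)^{\Re(\chi(2))-2}\right)$$
uniformly over $\chi$. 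It remains to verify $F(1,\chi)/\Gamma(\chi(2))=G(\chi)$: evaluating the Euler product for $F(s,\chi)$ at $s=1$ and absorbing the term $\chi(1)=1$ into the sum gives $F(1,\chi)=\prod_p\bigl(\sum_{n\ge 0}\chi(n+1)/p^{n}\bigr)\bigl(1-1/p\bigr)^{\chi(2)}$, so that $F(1,\chi)/\Gamma(\chi(2))=G(\chi)$ by definition of $G(\chi)$, establishing~\eqref{neq}.

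I do not anticipate a genuine difficulty, as both ingredients were prepared precisely for this combination. The only points needing care are the bookkeeping of the family hypothesis, handled by the constant family above, and confirming that the uniform $O(1)$ bound on the weighted coefficient sum forces all implied constants to be independent of $\chi$. The matching of exponents, $2R+1=3$ against the $(\log n)^{3}$ weight, is exactly why $R=1$ is the value dictated by the coefficient bound of Lemma~\ref{tau and zeta}, which makes the application exact rather than wasteful.
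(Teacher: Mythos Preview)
Your proposal is correct and is precisely the combination the paper intends: the paper's own proof consists of the single line ``Combining Lemma~\ref{tau and zeta} and Lemma~\ref{Selberg} gives'', and your write-up is a faithful unpacking of that combination, including the constant-in-$z$ family device, the match $2R+1=3$ with $R=1$, and the identification $F(1,\chi)/\Gamma(\chi(2))=G(\chi)$.
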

\begin{lemma}
\label{tau cong}
For any integer $m$,
$$\displaystyle\sum_{\substack{q \in \mathcal{K} \\ \tau(q) \equiv 0 \pmod{m}}}\frac{1}{q} \ll \frac{1}{m^{\log{2}/2}},$$
and if $p$ is prime
$$\displaystyle\sum_{\substack{q \in \mathcal{K} \\ \tau(q) \equiv 0 \pmod{p}}}\frac{1}{q} \ll \frac{1}{2^{p/2}}.$$
\end{lemma}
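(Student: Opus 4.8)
The plan is to prove the two bounds by independent arguments: the general statement follows from a crude size bound on $q$ combined with the count $\mathcal{K}(x)\ll x^{1/2}$ already recorded in the notation, while the prime case needs a structural constraint on the exponents that a prime modulus forces on $\tau(q)$.

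For arbitrary $m$, the key observation is that every $q$ in the sum satisfies $q\ge m$. Writing $q=\prod_i \ell_i^{a_i}\in\mathcal{K}$ with each $a_i\ge 2$, we have $\tau(q)=\prod_i(a_i+1)$, and factor by factor $\ell_i^{a_i}\ge 2^{a_i}\ge a_i+1$, so that $q\ge\tau(q)$. Since $\tau(q)$ is a positive multiple of $m$ it is at least $m$, hence $q\ge\tau(q)\ge m$. Therefore the sum is at most $\sum_{q\in\mathcal{K},\,q\ge m}1/q$, and I would bound this tail by partial summation against $\mathcal{K}(x)\ll x^{1/2}$, which gives $\int_m^{\infty}\mathcal{K}(t)t^{-2}\,dt\ll\int_m^{\infty}t^{-3/2}\,dt\ll m^{-1/2}$. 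As $\tfrac{\log 2}{2}<\tfrac12$ and $m\ge1$, this already yields the claimed bound $m^{-\log 2/2}$, in fact with room to spare.

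For prime $p$, the extra saving comes from a divisibility constraint. If $p\mid\tau(q)=\prod_i(a_i+1)$, then $p\mid(a_j+1)$ for some $j$, so $a_j\equiv-1\pmod p$; combined with $a_j\ge 2$ this forces $a_j\ge p-1$. Thus $q$ is divisible by $\ell^{\,p-1}$ for some prime $\ell$. I would then write $q=\ell^{a}q'$ with $\ell\nmid q'$, $q'\in\mathcal{K}$, $a\ge p-1$ and $a\equiv-1\pmod p$, and union-bound over the \emph{distinguished} prime $\ell$. This bounds the sum by $\sum_{\ell}\bigl(\sum_{a\ge p-1,\,a\equiv-1\,(p)}\ell^{-a}\bigr)\bigl(\sum_{q'\in\mathcal{K}}1/q'\bigr)$. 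The $q'$-sum equals the absolute constant $\prod_{\ell}\bigl(1+(\ell^2-\ell)^{-1}\bigr)$, the inner geometric sum over $a$ is $\ll\ell^{-(p-1)}$, and summing over $\ell$ gives $\ll\sum_{\ell}\ell^{-(p-1)}\ll 2^{-(p-1)}\ll 2^{-p/2}$, the last two steps using that the series is dominated by the term $\ell=2$.

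Both arguments are essentially routine, and I expect the only delicate points to be bookkeeping rather than genuine obstacles: in the general case, confirming the convergence of $\sum_{q\in\mathcal{K}}1/q$ and handling the partial-summation constant; and in the prime case, making sure the union bound is a legitimate over-count (each admissible $q$ is counted at least once, by selecting one distinguished prime $\ell$). The one conceptual point worth stressing is that it is exactly the geometric decay in $\ell$, dominated by $\ell=2$ and reflecting the forced exponent $a_j\ge p-1$, that produces the exponential-in-$p$ saving $2^{-p/2}$, which the crude bound $q\ge m$ cannot see.
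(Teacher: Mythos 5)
Your proposal is correct, and in both halves it actually proves slightly more than the lemma claims. For general $m$ your route is close in spirit to the paper's: both reduce to a lower bound on $q$ followed by a tail estimate against $\mathcal{K}(x)\ll x^{1/2}$ (the paper writes this as $\int \mathcal{K}(x)x^{-2}\,dx$). The difference is the lower bound itself: the paper uses the arithmetic--geometric mean inequality on the exponents to get $q\ge 2^{k(m^{1/k}-1)}\ge m^{\log 2}$, whence $m^{-\log 2/2}$, while your trivial observation $q\ge\tau(q)\ge m$ is both simpler and stronger, giving $m^{-1/2}$. For prime $p$ the paper stays with the same template: $p\mid(\alpha_j+1)$ forces $\alpha_j\ge p-1$, hence $q\ge 2^{p-1}$, and the same tail integral gives $2^{-(p-1)/2}\ll 2^{-p/2}$. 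Your argument here is genuinely different: instead of converting the forced exponent into a size bound and invoking $\mathcal{K}(x)\ll x^{1/2}$, you factor out the distinguished prime power $\ell^{a}$ with $a\equiv -1\pmod p$ and sum the resulting Euler-type product, using that $\sum_{q'\in\mathcal{K}}1/q'=\prod_{\ell}\bigl(1+\tfrac{1}{\ell(\ell-1)}\bigr)$ converges; the union bound over the choice of $\ell$ is a legitimate over-count, and the geometric series dominated by $\ell=2$ gives $2^{-(p-1)}$, again better than the stated $2^{-p/2}$. Both approaches are sound; the paper's has the virtue of uniformity (one mechanism for both cases), yours has the virtue of sharper constants and of isolating exactly where the exponential saving in $p$ comes from. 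The only bookkeeping points to record in a write-up are the one-sided partial summation (the boundary term $\mathcal{K}(m)/m\ll m^{-1/2}$ is harmless) and the uniformity in $p$ of the constant in $\sum_{\ell}\ell^{-(p-1)}\ll 2^{-(p-1)}$, which holds for all $p\ge 3$.
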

\begin{proof}
Suppose $\tau(q) \equiv 0 \pmod{m}$ and let $q=p_1^{\alpha_1}\dots p_k^{\alpha_k}$ be the prime factorization of $q$, so that
$$\tau(q)=(\alpha_1+1)\dots(\alpha_k+1)\ge m.$$
By the arithmetic-geometric mean inequality,
$$q \ge 2^{(\alpha_1+1)+ \dots +(\alpha_k+1)-k}\ge 2^{k(\tau(q)^{1/k}-1)}\ge 2^{k(m^{1/k}-1)}\ge 2^{\log{m}}=m^{\log{2}},$$
and since $\mathcal{K}(x)\ll x^{1/2}$ we get
$$
\displaystyle\sum_{\substack{q \in \mathcal{K} \\ \tau(q) \equiv 0 \pmod{m}}}\frac{1}{q} \le 
\displaystyle\sum_{\substack{q \in \mathcal{K} \\ q \ge m^{\log{2}}}}\frac{1}{q}\ll \displaystyle\int_{m^{\log{2}}}^{\infty}\frac{\mathcal{K}(x)}{x^2}dx\ll \frac{1}{m^{\log{2}/2}}. $$
 Suppose $p$ is prime,  if $\tau(n)\equiv 0 \pmod p$ then $n\ge 2^{p-1}$. As before we get
$$\displaystyle\sum_{\substack{q \in \mathcal{K} \\ \tau(q) \equiv 0 \pmod{p}}}\frac{1}{q}\ll \frac{1}{2^{p/2}}.$$

\end{proof}

\section{Proof of Theorem~\ref{thm:t1}}

By Lemma~\ref{decom} we have
\begin{align*}
\displaystyle\sum_{n=1}^{N}e_m(a\tau(n))&=\displaystyle\sum_{\substack{k\in \mathcal{M}_m \\ k \le N}}
\displaystyle\sum_{\substack{q\in \mathcal{Q}_m \\ q\le N/k}}
\displaystyle\sum_{\substack{s\in \mathcal{S}\\ \gcd(s,q)=1 \\ s\le N/qk}} e_m(a\tau(kqs)) \\
&=\displaystyle\sum_{\substack{k\in \mathcal{M}_m \\ k \le N}}
\displaystyle\sum_{\substack{q\in \mathcal{Q}_m \\ q\le N/k}}
\displaystyle\sum_{\substack{s\in \mathcal{S}\\ \gcd(s,q)=1 \\ s\le N/qk}} e_m(a\tau(q)\tau(s)) \\
&=\displaystyle\sum_{\substack{k\in \mathcal{M}_m \\ k \le N}}
\displaystyle\sum_{\substack{q\in \mathcal{Q}_m \\ q\le N/k}}
\displaystyle\sum_{\substack{s\in \mathcal{S}\\ \gcd(s,q)=1 \\ s\le N/qk}} e_m(a\tau(q)2^{\omega(s)}).
\end{align*}
Let $K=N^{1/2}$ and grouping together values of $2^{\omega(s)}$ in the same residue class$\pmod m$ we have,
\begin{align*}
\displaystyle\sum_{n=1}^{N}e_m(a\tau(n))
&=\displaystyle\sum_{\substack{k\in \mathcal{M}_m \\ k \le N}}
\displaystyle\sum_{\substack{q\in \mathcal{Q}_m \\ q\le K/k}}\displaystyle\sum_{r=1}^{t}M(N/qk,q,r,t)e_m(a\tau(q)2^r)
\\ &\quad \quad \quad+\displaystyle\sum_{\substack{k\in \mathcal{M}_m \\ k \le N}}
\displaystyle\sum_{\substack{q\in \mathcal{Q}_m \\ K/k<q\le N/k}}\displaystyle\sum_{r=1}^{t}M(N/qk,q,r,t)e_m(a\tau(q)2^r).
 \\
\end{align*}
By choice of $K$, we have
$N/qk\ge q$ \  when \  $q\le K/k$.
Hence we may apply Lemma~\ref{Mq} to the first sum above,
\begin{align*}
\displaystyle\sum_{n=1}^{N}e_m(a\tau(n))&=\frac{6}{\pi^2}\frac{N}{t}\displaystyle\sum_{\substack{k\in \mathcal{M}_m \\ k \le N}}\displaystyle\sum_{\substack{q\in \mathcal{Q}_m \\ q\le N/k}}\displaystyle\sum_{r=1}^{t}\frac{h(q)}{qk}e_m(a\tau(q)2^r) \\
&+O\left(\displaystyle\sum_{\substack{k\in \mathcal{M}_m \\ k \le N}}\displaystyle\sum_{\substack{q\in \mathcal{Q}_m \\ q\le K/k}}\frac{Nt\log \log {q}}{qk}(\log{(N/qk)})^{-\alpha_t}\right)+O\left(\displaystyle\sum_{\substack{k\in \mathcal{M}_m \\ k \le N}}
\displaystyle\sum_{\substack{q\in \mathcal{Q}_m \\ K/k<q\le N/k}}\frac{tN}{qk} \right) \\
&\quad \quad \quad+O\left(\displaystyle\sum_{\substack{k\in \mathcal{M}_m \\ k \le N}}\displaystyle\sum_{\substack{q\in \mathcal{Q}_m \\ q\le K/k}}(e^4\log{(N/kq)})^{\omega(q)}\left(\frac{N}{kq}\right)^{1/2}\right).
\end{align*}
Considering the first two error terms,
\begin{align}
\label{bound 1}
\displaystyle\sum_{\substack{k\in \mathcal{M}_m \\ k \le N}}
\displaystyle\sum_{\substack{q\in \mathcal{Q}_m \\ K/k<q\le N/k}}\frac{tN}{qk} \ll tN \int_{K}^{N}\frac{\mathcal{K}(x)}{x^2}dx \ll
\frac{tN}{K^{1/2}},
\end{align}
and since the sum
$$\displaystyle\sum_{\substack{k\in \mathcal{M}_m \\ k \le N}}\displaystyle\sum_{\substack{q\in \mathcal{Q}_m \\ q\le K/k}}\frac{\log \log{q}}{qk},$$
is bounded uniformly in $m$ as $K,N \rightarrow \infty$, we get
\begin{align}
\label{bound 2}
\displaystyle\sum_{\substack{k\in \mathcal{M}_m \\ k \le N}}\displaystyle\sum_{\substack{q\in \mathcal{Q}_m \\ q\le K/k}}\frac{Nt\log{q}}{qk}(\log{(N/qk)})^{-\alpha_t}&\ll Nt\log (N/K)^{-\alpha_t} \displaystyle\sum_{\substack{k\in \mathcal{M}_m \\ k \le N}}\displaystyle\sum_{\substack{q\in \mathcal{Q}_m \\ q\le K/k}}\frac{\log \log{q}}{qk} \nonumber \\ &\ll
Nt(\log(N/K))^{-\alpha_t}.
\end{align}
For the last term,
\begin{align*}
&\displaystyle\sum_{\substack{k\in \mathcal{M}_m \\ k \le N}}\displaystyle\sum_{\substack{q\in \mathcal{Q}_m \\ q\le K/k}}(e^{4}\log{(N/kq)})^{\omega(q)}\left(\frac{N}{kq}\right)^{1/2} \le N^{2/3}\displaystyle\sum_{\substack{n \in \mathcal{K} \\ n\le N }}\left(\frac{1}{n}\right)^{2/3}(e^4\log{N})^{\omega(n)},
\end{align*}
and since $$\omega(n)\le (1+o(1))\frac{\log{n}}{\log\log{n}},$$
we get
\begin{align*}
N^{2/3}\displaystyle\sum_{\substack{k\in \mathcal{M}_m \\ k \le N}}\displaystyle\sum_{\substack{q\in \mathcal{Q}_m \\ q\le K/k}}(e^{4}\log{N})^{\omega(q)}\left(\frac{1}{kq}\right)^{2/3} \le N^{5/6+o(1)}\displaystyle\sum_{\substack{n \in \mathcal{K} \\ n\le N }}\left(\frac{1}{n}\right)^{2/3}(e^4(\log{N})^{5/6})^{\omega(n)}.
\end{align*}
We may bound the sum on the right by noting
\begin{align*}
\displaystyle\sum_{\substack{n \in \mathcal{K} \\ n\le N }}\left(\frac{1}{n}\right)^{2/3}(e^4(\log{N})^{5/6})^{\omega(n)} &\le \displaystyle\prod_{p}\left(1+e^4(\log{N})^{5/6}\displaystyle\sum_{k=2}^{\infty}\frac{1}{p^{2k/3}}\right),
\end{align*}
taking logarithms we see that
\begin{align*}
\log\left(\displaystyle\prod_{p}\left(1+e^4(\log{N})^{5/6}\displaystyle\sum_{k=2}^{\infty}\frac{1}{p^{2k/3}}\right)\right)&=
\displaystyle\sum_{p}\log\left(\left(1+\frac{e^4(\log{N})^{5/6}}{p^{4/3}}\frac{p^{2/3}}{p^{2/3}-1}\right)\right) \\
&\le e^4(\log{N})^{5/6}\sum_{p}\frac{1}{p^{4/3}}\frac{p^{2/3}}{p^{2/3}-1} & \\
&\ll (\log N)^{5/6},
\end{align*}
hence we have for some absolute constant $c$
\begin{equation}
\label{bound 3}
\displaystyle\sum_{\substack{k\in \mathcal{M}_m \\ k \le N}}\displaystyle\sum_{\substack{q\in \mathcal{Q}_m \\ q\le K/k}}(e^{4}\log{(N/kq)})^{\omega(q)}\left(\frac{N}{kq}\right)^{1/2}\ll N^{5/6+o(1)}e^{c(\log{N})^{5/6}}.
\end{equation}
Combining~\eqref{bound 1},~\eqref{bound 2} and~\eqref{bound 3} gives
\begin{align*}
\displaystyle\sum_{n=1}^{N}e_m(a\tau(n))&=\frac{6}{\pi^2}\frac{N}{t}\displaystyle\sum_{\substack{k\in \mathcal{M}_m \\ k \le N}}\frac{1}{k}\displaystyle\sum_{\substack{q\in \mathcal{Q}_m \\ q\le N/k}}\frac{h(q)}{q}\displaystyle\sum_{r=1}^{t}e_m(a\tau(q)2^r)+O\left(Nt(\log(N/K))^{-\alpha_t}\right) \\ & \quad +O\left(\frac{tN}{K^{1/2}}\right)+O\left(N^{5/6+o(1)}e^{c(\log{N})^{5/6}}\right).
\end{align*}
Recalling the choice of $K$ we get
\begin{align}
\label{almost finished}
\displaystyle\sum_{n=1}^{N}e_m(a\tau(n))=\frac{6}{\pi^2}\frac{N}{t}\displaystyle\sum_{\substack{k\in \mathcal{M}_m \\ k \le N}}\frac{1}{k}\displaystyle\sum_{\substack{q\in \mathcal{Q}_m \\ q\le N/k}}\frac{h(q)}{q}\displaystyle\sum_{r=1}^{t}e_m(a\tau(q)2^r)+O\left(Nt(\log{N})^{-\alpha_t}\right).
\end{align}
For the main term,
\begin{align*}
\displaystyle\sum_{\substack{k\in \mathcal{M}_m \\ k \le N}}\frac{1}{k}\displaystyle\sum_{\substack{q\in \mathcal{Q}_m \\ q\le N/k}}\frac{h(q)}{q}\displaystyle\sum_{r=1}^{t}e_m(a\tau(q)2^r)&=\displaystyle\sum_{\substack{k\in \mathcal{M}_m \\ k \le N}}\frac{1}{k}\displaystyle\sum_{q\in \mathcal{Q}_m}\frac{h(q)}{q}\displaystyle\sum_{r=1}^{t}e_m(a\tau(q)2^r) \\
&+O\left(t\sum_{\substack{k\in \mathcal{M}_m \\ k \le N}}\frac{1}{k}\left(\frac{k}{N}\right)^{1/2}\right) \\
&=\displaystyle\sum_{k\in \mathcal{M}_m }\frac{1}{k}\displaystyle\sum_{q\in \mathcal{Q}_m}\frac{h(q)}{q}\displaystyle\sum_{r=1}^{t}e_m(a\tau(q)2^r) \\
&+O\left(\frac{t}{N^{1/2}}\right) \\
&=\zeta(m)\displaystyle\sum_{q\in \mathcal{Q}_m}\frac{h(q)}{q}\displaystyle\sum_{r=1}^{t}e_m(a\tau(q)2^r)+O\left(\frac{t}
{N^{1/2}}\right).
\end{align*}
Hence we have
$$\displaystyle\sum_{n=1}^{N}e_m(a\tau(n))=\frac{\zeta{(m)}}{t}\frac{6}{\pi^2}\left(\displaystyle\sum_{r \pmod m}H(r,m)S_m(ar)\right)N+O\left(tN(\log{N})^{-\alpha_t}\right).$$

\section{Proof of Theorem~\ref{thm:t2}}
Let 
$$C(p,r,N)= \# \{ n\le N : \tau(n) \equiv r \pmod p \},$$
 so that  
\begin{equation}
\label{con exp}
\displaystyle\sum_{n=1}^{N}e_p{(a\tau(n))}=\displaystyle\sum_{r=0}^{p-1}C(p,r,N)e_m(an).
\end{equation}
Suppose $(r,p)=1$, using orthogonality of characters and Lemma~\ref{c tau},
\begin{align*}
C(p,r,N)&=\frac{1}{p-1}\sum_{n=1}^{N}\displaystyle\sum_{\chi (\text{mod} \ p)}\overline\chi(r)\displaystyle\chi(\tau(n)) \\
&=\frac{1}{p-1}\sum_{\chi(2)=1}\overline\chi(r)G(\chi)N
+\frac{1}{p-1}\sum_{\chi(2)\neq  1}\overline\chi(r)G(\chi)N(\log{N})^{\chi(2)-1} \\  & \quad +O\left(N (\log{N})^{-(\alpha_p+1)}\right),
\end{align*}
 and for $r=0$, we have by another applications of  Lemma~\ref{c tau}
$$C(p,0,N)=\displaystyle\sum_{n=1}^{N}(1-\chi_0(\tau(n)))=C_pN+O\left(N (\log{N})^{-2}\right),$$
for some constant $C_p$.
Hence from~\ref{con exp}
\begin{align*}
\displaystyle\sum_{n=1}^{N}e_p{(a\tau(n))}&=C_pN+O\left(pN(\log{N})^{-(\alpha_t+1)}\right) \\ &+
\frac{N}{p-1}\sum_{r=1}^{p-1}\left(\sum_{\chi(2)=1}\overline\chi(r)G(\chi)e_p(ar)
+\sum_{\chi(2)\neq 1}\overline\chi(r)e_p(ar)G(\chi)(\log{N})^{\chi(2)-1}\right) \\
&=A_pN+\frac{N}{p-1}\sum_{\chi(2) \neq 1}G(\chi)(\log{N})^{\chi(2)-1}\sum_{r=1}^{p-1}\overline\chi(r)e_p(ar) \\ & \quad +O\left(pN(\log{N})^{-(\alpha_t+1)}\right),
\end{align*}
for some constant $A_p$. If $\chi(2)\neq 1$  then we have $$\left|\displaystyle\sum_{r=1}^{p-1}\overline\chi(r)e_p(ar)\right|=p^{1/2},$$ so that
\begin{align}
\label{AAAAAAAAAA}
\displaystyle\sum_{n=1}^{N}e_p{(a\tau(n))}&=A_p N +O\left(p^{1/2}N(\log{N})^{-\alpha_t}\right)+O\left(pN(\log{N})^{-(\alpha_t+1)}\right) \nonumber \\
&=A_p N +O\left(pN(\log{N})^{-(\alpha_t+1)}\right),
\end{align}
since if $$p^{1/2}N(\log{N})^{-\alpha_t}\le pN(\log{N})^{-(\alpha_t+1)} \quad \text{then} \quad N\le  pN(\log{N})^{-(\alpha_t+1)}.$$
Finally, comparing~\eqref{AAAAAAAAAA} with the leading term in the asymptotic formula from Theorem~\ref{thm:t1}, we see that 
$$A_p=\frac{\zeta{(p)}}{t}\frac{6}{\pi^2}\left(\displaystyle\sum_{r=0}^{p-1}H(r,p)S_{p}(ar)\right).$$ 

\section{Proof of Theorem~\ref{ub m}}

Considering the main term in Theorem~\ref{thm:t1}
\begin{align*}
\left|\displaystyle\sum_{r=0}^{m-1}H(r,m)S_{m}(ar)\right|&\le\sum_{d | m}\left|\sum_{\substack{r=0 \\ \gcd(r,m)=d}}^{m-1}H(r,m)S_{m}(ar)\right| \\
&\le \sum_{d | m}\left(\sum_{\substack{r=0 \\ \gcd(r,m)=d}}^{m-1}H(r,m)\right)\max_{\gcd(\lambda,m)=d}|S_{m}(\lambda)|.
\end{align*}
Writing $c=\log{2}/2$, by Lemma~\ref{tau cong}
\begin{align*}
\sum_{\substack{r=0 \\ \gcd(r,m)=d}}^{m-1}H(r,m)&=\sum_{\substack{r=0 \\ \gcd(r,m)=d}}^{m-1}
\displaystyle\sum_{\substack{q \in \mathcal{Q}_m \\ \tau(q) \equiv r \pmod m}}\frac{h(q)}{q} \\
&\le \displaystyle\sum_{\substack{ q \in \mathcal{K} \\ \tau(q) \equiv 0 \pmod{d}}}\frac{1}{q} \ll \dfrac{1}{d^{c}},
\end{align*}
so that 
\begin{equation}
\label{coefficient bound}
\displaystyle\sum_{r=0}^{m-1}H(r,m)S_{m}(ar)\ll \sum_{d | m}\frac{1}{d^c}\max_{\gcd(\lambda,m)=d}|S_{m}(\lambda)|.
\end{equation}
Suppose $\gcd(\lambda,m)=d$, so that $\lambda=d\lambda'$ and $m=dm'$ for some $\lambda'$ and $m'$ with $\gcd(\lambda',m')=1$. Let $t_d$ denote the order of $2 \pmod {m'}$. Then we have
$$S_{m}(\lambda)=\sum_{n=1}^{t}e_m(\lambda 2^n)=\frac{t}{t_d}\sum_{n=1}^{t_d}e_{m'}(\lambda' 2^n).$$
By the main result of \cite{Bou}, if $t_d \ge (m/d)^{\varepsilon}$ then for some $\delta>0$,
\begin{equation}
\label{coefficient bound 1}
S_{m}(\lambda)\ll \left(\frac{d}{m}\right)^{\delta}t.
\end{equation}
Suppose  $t\ge m^{\varepsilon}$, then since
$$t_d \ge \dfrac{t}{d}\ge \frac{m^{\varepsilon/2}}{d}m^{\varepsilon/2},$$
if $d \le m^{\varepsilon/2}$ then we have
$t_d \ge (m/d)^{\varepsilon/2}$. Hence by~\eqref{coefficient bound 1}
$$S_{m}(\lambda)\ll \left(\frac{d}{m}\right)^{\delta}t\ll \frac{t}{m^{\delta_0}}.$$
Hence by~\eqref{coefficient bound}, for some $\delta_1>0$
\begin{align}
\label{coefficient final bound}
\displaystyle\sum_{r=0}^{m-1}H(r,m)S_{m}(ar)&\ll \sum_{\substack{d | m \\ d\le m^{\varepsilon/2}}}\frac{1}{d^c}\max_{\gcd(\lambda,m)=d}|S_{m}(\lambda)|
+\sum_{\substack{d | m \\ d \ge m^{\varepsilon/2}}}\frac{1}{d^c}\max_{\gcd(\lambda,m)=d}\left|S_{m}(\lambda)\right| \nonumber \\
& \ll \sum_{\substack{d | m \\ d\le m^{\varepsilon/2}}}\frac{t}{m^{\delta_1}}
+\sum_{\substack{d | m \\ d \ge m^{\varepsilon/2}}}\frac{1}{m^{\delta_1}}=\frac{\tau(m)}{m^{\delta_1}}t,
\end{align}
and the result follows combining~\eqref{coefficient final bound} with Theorem~\ref{thm:t1}.  

\section{Proof of Theorem~\ref{ub p}}

By Lemma~\ref{tau cong}
\begin{align*}
\displaystyle\sum_{r=0}^{p-1}H(r,p)S_{p}(ar,t)&=\displaystyle\sum_{r=1}^{p-1}H(r,p)S_{p}(ar)+tH(0,p) \\
&\ll \frac{t}{2^{p/2}}+\left(\displaystyle\sum_{r=1}^{p-1}H(r,p) \right)\max_{\gcd(\lambda,p)=1}\left|S_{p}(\lambda)\right| \\
& \ll \frac{t}{2^{p/2}}+\max_{\gcd(\lambda,p)=1}\left|S_{p}(\lambda)\right|. 
\end{align*}
In~\cite{Kerr} it is shown the following bound is a consequence of~\cite{Kor} and~\cite{Sk},
$$\max_{\gcd(\lambda,p)=1}|S_{p}(\lambda)| \ll \begin{cases}  p^{1/8}t^{22/36}(\log{p})^{7/6}, \quad  \ \  \  \ t \le p^{1/2}, \\  
 p^{1/4}t^{13/36}(\log{p})^{7/6}, \quad \  \ p^{1/2}<t \le p^{3/5}(\log{p})^{-6/5}, \\ 
 p^{1/6}t^{1/2}(\log{p})^{4/3}, \quad \ \ \ \ \  p^{3/5}<t \le p^{2/3}(\log{p})^{-2/3}, \\
p^{1/2}\log{p}, \quad \quad \quad  \ \ \ \ \  \ \ \  \ \ t>p^{2/3}(\log{p})^{-2/3}, \end{cases}$$
and the result follows by Theorem~\ref{thm:t2}.


\begin{thebibliography}{12}

\bibitem{BHS}
W. D. Banks, G. Harman and I. E. Shparlinski, `Distributional properties of the largest prime factor', { \it Michigan Math. J}., {\bf 53} (2005), 665--681.

\bibitem{BS}
W. Banks and I. E. Shparlinski, `Congruences and rational exponential
sums with the Euler function',{ \it Rocky Mountain J. Math}., {\bf 36 } (2006),
1415--1426.


\bibitem{Bou}
J. Bourgain, `Exponential sum estimates on subgroups of $\mathbb{Z}_q$, $q$ arbitrary', {\it J. Anal. Math.}, {\bf 97} (2005), 317--355.

\bibitem{Co}
E. Cohen, `Arithmetical notes, V. A divisibility property of the divisor function', {\it Amer. Jour. Math.\/}, {\bf 83}, (1961), 693--697.
\bibitem{DeIw}
J. M. Deshouillers and H. Iwaniec, `An additive divisor problem', {\it J. London Math. Soc.}, {\bf 26}, (1982), 1–14.
\bibitem{ErdMir} P. Erd{\H o}s and L. Mirsky,
`The distribution of values of the divisor function $d(n)$', {\it
Proc. London Math. Soc.\/}, {\bf 2}, (1952),  257--271.
\bibitem{Gr}
A. Granville, `The lattice points of an n-dimensional tetrahedron',{ \it Aequationes Math.}, {\bf 41},  (1991), 234–241.
\bibitem{HardyWright} G. H. Hardy and E. M. Wright, 
{\it An introduction to the theory of numbers\/}, Oxford Univ. Press, Oxford, 1979.
\bibitem{Hb}
D. R. Heath-Brown, `The divisor function at consecutive integers', {\it Mathematika}, {\bf 31} (1984), 141-149.
\bibitem{HbKg}
D. R. Heath–Brown and S. V. Konyagin, `New bounds for Gauss sums derived from kth
powers, and for Heilbronn’s exponential sum', { \it Quart. J. Math.}, {\bf 51}, (2000), 221–235.
\bibitem{Kerr}
B. Kerr, `Incomplete Exponential sums over exponential functions', arXiv:1302.4170.
\bibitem{Kor}   
N. M. Korobov, `On the distribution of digits in 
periodic fractions', {\it Matem. Sbornik\/}, {\bf 89}, (1972), 
654--670 (in Russian).
\bibitem{LuSh}
F. Luca and I. E. Shparlinski, `On the values of the divisor function',
{\it Monatsh. Math.} {\bf 154}, (2008), 59–69. 
\bibitem{MgVu}
H. L. Montgomery and R. C. Vaughan, {\it Multiplicative number theory I. Classical Theory}, Cambridge University Press, 2007.

\bibitem{Nz}
W. Narkiewicz, {\it Uniform Distribution of Sequences of Integers in Residue Classes}, Springer-Verlag, 1984.


\bibitem{Se}
L. G. Sathe, `On a Congruence Property of the Divisor Function', {\it Am. Jour. Math.}, {\bf 67}, (1945), 397--406.
\bibitem{Se1}
A. Selberg,  `Note on a paper by L.G. Sathe', { \it J. Indian Math. Soc.}, {\bf 18}, (1954), 83--87.

\bibitem{Sk}
I. D. Shkredov, `Some new inequalities in additive cominatorics', arXiv:1208.2344, v3


\bibitem{Sp}
I. E. Shparlinski, `Open problems on exponential and character sums', 
{\it Number Theory: Proc.   5th China-Japan Seminar, Osaka, 2008\/}, 
World Scientific, 2010, 222--242.

\bibitem{Te}
G. Tenenbaum, {\it Introduction to Analytic and Probabilistic Number Theory},  Cambrdge University Press, 1995





\end{thebibliography}
\end{document}